\newtheorem{theorem}{Theorem}[section]
\newtheorem{lemma}[theorem]{Lemma}
\newtheorem{corollary}[theorem]{Corollary}
\newtheorem{proposition}[theorem]{Proposition}
\newtheorem{example}[theorem]{Example}
\numberwithin{equation}{section}
\newcommand{\CC}{C_k}
\newcommand{\NN}{\mathbb{N}}
\newcommand{\GG}{\mathfrak{G}}
\newcommand{\UU}{\mathcal{U}}
\newcommand{\w}{\omega}
\newcommand{\TTT}{\mathcal{T}}
\newcommand{\KK}{\mathcal{K}}
\newcommand{\VV}{\mathbb{V}}
\newcommand{\AAA}{\mathcal A}
\newcommand{\IR}{\mathbb{R}}
\newcommand{\e}{\varepsilon}
\newcommand{\cl}{\mathrm{cl}}
\renewcommand{\phi}{\varphi}
\newcommand{\U}{\mathcal U}
\newcommand{\supp}{\mathrm{supp}}
\newcommand{\conv}{\mathrm{conv}}
\title[Locally convex properties of  free locally convex spaces]{Locally convex properties of \\ free locally convex spaces}
\author{S. Gabriyelyan}
\address{Department of Mathematics, Ben-Gurion University of the Negev, Beer-Sheva, P.O. 653, Israel}
\email{saak@math.bgu.ac.il}
\subjclass[2000]{Primary 46A03, 46A08; Secondary 54C35}
\keywords{free locally convex space, weak barrelledness, $(DF)$-, $(df)$- and quasi-$(DF)$-spaces,  Grothendieck property, Dunford--Pettis property,  sequential Dunford--Pettis property}
\begin{document}

\begin{abstract}
Let $L(X)$ be the free locally convex space  over a Tychonoff space $X$. We show that the following assertions are equivalent: (i) $L(X)$ is $\ell_\infty$-barrelled, (ii) $L(X)$ is $\ell_\infty$-quasibarrelled, (iii) $L(X)$ is $c_0$-barrelled, (iv) $L(X)$ is $\aleph_0$-quasibarrelled, and (v) $X$ is a $P$-space. If $X$ is a non-discrete metrizable space, then $L(X)$ is  $c_0$-quasibarrelled but it is neither  $c_0$-barrelled nor $\ell_\infty$-quasibarrelled. We prove that $L(X)$ is a $(DF)$-space iff $X$ is a countable discrete space.
We show that there is a countable Tychonoff space $X$ such that  $L(X)$ is a  quasi-$(DF)$-space but is  not a $c_0$-quasibarrelled space. For each non-metrizable compact space  $K$, the space $L(K)$ is a $(df)$-space but is not a quasi-$(DF)$-space. If $X$ is a $\mu$-space, then $L(X)$ has the Grothendieck property iff every compact subset of $X$ is finite. We show that $L(X)$ has the Dunford--Pettis property for every Tychonoff space $X$. If $X$ is a sequential $\mu$-space (for example, metrizable), then $L(X)$ has the sequential  Dunford--Pettis property iff $X$ is discrete.
\end{abstract}

\maketitle



\section{Introduction}


We consider three types of locally convex properties (all relevant definitions are given in Section \ref{seq:Main}): weak barrelledness conditions, $(DF)$-like properties and the Dunford--Pettis type properties.

Weak barrelledness concepts are the cornerstone in the study of general locally convex spaces (lcs for short) and have been considered by many authors.
These concepts were examined in particular for different classes of function spaces.  Denote by $\CC(X)$ and $C_p(X)$  the space $C(X)$ of real-valued continuous functions on a  Tychonoff (=completely regular and Hausdorff) space $X$ endowed with the compact-open topology and the pointwise topology, respectively.
Nachbin \cite{Nachbin} and Shirota \cite{Shirota} showed that $\CC(X)$ is barrelled if and only if $X$ is a $\mu$-space. In \cite{Buch-Schmets}, Buchwalter and Schmets proved that $C_p(X)$ is barrelled if and only if every functionally bounded subset of $X$ is finite. Warner \cite{Warner} characterized quasibarrelled spaces $\CC(X)$. It is well known that $C_p(X)$ is quasibarrelled for every Tychonoff space $X$.

The concept of $\aleph_0$-(quasi)barrelledness appears for the first time in Husain \cite{Husain}, and actually it has been already considered by Grothendieck in \cite{Grot-54}. De Wilde and Houet \cite{Wilde-Houet} and Levin and Saxon \cite{Lev-Sax} introduced and studied $\ell_\infty$-(quasi)barrelled spaces, and $c_0$-(quasi)barrelled spaces were treated by Webb \cite{Webb-c0}. Buchwalter and Schmets \cite{Buch-Schmets} showed that $\CC(X)$ is $c_0$-barrelled if and only if it is $\ell_\infty$-barrelled. K\c{a}kol, Saxon and Todd \cite{KST-1} constructed an $\ell_\infty$-barrelled space $\CC(X)$ which is not $\aleph_0$-barrelled. For further results and historical remarks we refer the reader to the classical books \cite{Jar} and \cite{PB} and the articles \cite{KST-1,KST}.

An important subclass of $\aleph_0$-quasibarrelled spaces is the class of $(DF)$-spaces introduced by Grothendieck \cite{Grot-54}. A wider class of $(df)$-spaces was defined by Jarchow \cite{Jar}. An lcs $(E,\tau)$ is called a {\em $(DF)$-space} (a {\em almost $(DF)$-space} or a {\em $(df)$-space}) if it has a fundamental bounded sequence and is $\aleph_0$-quasibarrelled ($\ell_\infty$-quasibarrelled or $c_0$-quasibarrelled, respectively).
The strong dual of any Fr\'{e}chet space is a $(DF)$-space. Every separable almost $(DF)$-space is a duasibarrelled $(DF)$-space by Proposition 12.5.4 of \cite{Jar}. In \cite{Warner}, Warner proved that $\CC(X)$ is a $(DF)$-space if and only if each countable union of compact sets in $X$ is relatively compact. Morris and Wulbert \cite{Mor-Wul} showed that there are $(DF)$-spaces $\CC(X)$ which are not $\aleph_0$-barrelled.  An example of a $(df)$-space $\CC(X)$ which is not a $(DF)$-space is given in \cite[Example~3]{KST}. For  very nice expositions of $(DF)$-like locally convex spaces we refer the reader to Chapter 12 of \cite{Jar} and Chapter 8 of \cite{PB}. In \cite{FGK-quasi-DF}, we introduced and studied the class of quasi-$(DF)$-spaces and constructed an example of a quasi-$(DF)$-space which is not a $(DF)$-space. The diagram below shows  relationships between the discussed weak barrelledness conditions
\[
\xymatrix{
\mbox{barrelled} \ar@{=>}[r]\ar@{=>}[d]   &   \mbox{$\aleph_0$-barrelled} \ar@{=>}[r] \ar@{=>}[d] &  \mbox{$\ell_\infty$-barrelled}  \ar@{=>}[r]\ar@{=>}[d] &  \mbox{$c_0$-barrelled} \ar@{=>}[d]\\
\mbox{quasibarrelled} \ar@{=>}[r]   & \mbox{$\aleph_0$-quasibarrelled} \ar@{=>}[r] & \mbox{$\ell_\infty$-quasibarrelled} \ar@{=>}[r] & {\mbox{$c_0$-quasibarrelled}} \\
\mbox{quasi-$(DF)$-space} & \mbox{$(DF)$-space} \ar@{=>}[r] \ar@{=>}[l] \ar@{=>}[u] &  \mbox{almost $(DF)$-space} \ar@{=>}[r] \ar@{=>}[u] & \mbox{$(df)$-space}  \ar@{=>}[u]
}
\]

Recall that an lcs $E$ is said to have  the {\em Grothendieck property}  if every weak-$\ast$ convergent sequence in the strong dual $E'_\beta$ is weakly convergent. We proved in \cite{GK-DP} that $C_p(X)$ has the Grothendieck property if and only if every functionally bounded subset of $X$ is finite, and if $X$ is a sequential space, then $\CC(X)$ has the Grothendieck property if and only if $X$ is discrete.

Following Grothendieck \cite{Grothen}, an lcs $E$ is said to have  the {\em Dunford--Pettis property} ($(DP)$ {\em property} for short) if every continuous linear operator $T$ from $E$ into a quasi-complete locally convex space $F$, which transforms bounded sets of $E$ into relatively weakly compact subsets of $F$, also transforms absolutely convex weakly compact subsets of $E$ into relatively compact subsets of $F$. In \cite{GK-DP}, we proved that $C_p(X)$ has the $(DP)$ property for every Tychonoff space $X$ and showed that $\CC(X)$ has the $(DP)$ property if $X$ is  hemicompact or a cosmic space.

Grothendieck proved in \cite[Proposition~2]{Grothen} that a Banach space $E$ has the $(DP)$ property if and only if  given weakly null sequences $\{ x_n\}_{n\in\NN}$ and $\{ \chi_n\}_{n\in\NN}$ in $E$ and the Banach dual $E'$ of $E$, respectively, then $\lim_n \chi_n(x_n)=0$. He used this result to show that every Banach space $C(K)$ has the $(DP)$ property, see \cite[Th\'{e}or\`{e}me~1]{Grothen}. Extending this result to locally convex spaces and following \cite{Gabr-free-resp}, we  consider the following ``sequential'' version of the $(DP)$ property: an lcs $E$ is said to have  the {\em sequential Dunford--Pettis property} ($(sDP)$ {\em property})  if  given weakly null sequences $\{ x_n\}_{n\in\NN}$ and $\{ \chi_n\}_{n\in\NN}$ in $E$ and the strong dual $E'_\beta$ of $E$, respectively, then $\lim_n \chi_n(x_n)=0$. In \cite{GK-DP}, we showed that $C_p(X)$ has the $(sDP)$ property for every Tychonoff space $X$. If $X$ is an ordinal space or a locally compact paracompect space, then $\CC(X)$ has the $(sDP)$ property (see \cite{GK-DP}).

The aforementioned results motivate to consider weak barrelledness conditions, the Grothendieck property and $(DP)$-type properties in other important classes of locally convex spaces. One of such classes is the class of free locally convex spaces. Following \cite{Mar}, the {\em  free locally convex space}  $L(X)$ on a Tychonoff space $X$ is a pair consisting of a locally convex space $L(X)$ and  a continuous map $i: X\to L(X)$  such that every  continuous map $f$ from $X$ to a locally convex space  $E$ gives rise to a unique continuous linear operator ${\bar f}: L(X) \to E$  with $f={\bar f} \circ i$. The free locally convex space $L(X)$ always exists and is essentially unique.

We consider the following problem: {\em Characterize Tychonoff space $X$ for which the free lcs $L(X)$ satisfies some of weak barrelledness conditions}. 
Let us recall the results which are known up to now. It is proved in Theorem 5 of \cite{FKS-1} that $L(X)$  is quasibarrelled if and only if $L(X)$ is barrelled if and only if $X$ is discrete. This result was essentially strengthen  in \cite{Gabr-L(X)-Mackey}, where we proved that $L(X)$ is a Mackey space if and only if $X$ is discrete. This somewhat surprising result shows that, for a non-discrete $X$, the Mackey topology of  $L(X)$ induced on $X$ is strictly finer than the original topology of $X$. In \cite[Theorem~2.3]{Saxon-14}, Saxon proved that $L(X)$ is $\aleph_0$-barrelled if and only if $X$ is discrete. Ferrando, K\c{a}kol and Saxon proved in  \cite[Theorem~6]{FKS-1} that $L(X)$ is $\ell_\infty$-barrelled if and only if $X$ is a $P$-space. Recall that a topological space $X$ is called a {\em $P$-space} if every countable intersection of open sets is open. In  Theorem \ref{t:L(X)-c0-barrelled} below, we characterize  other weak barrelledness conditions from two upper rows of the diagram (we include (i) to Theorem \ref{t:L(X)-c0-barrelled} because our proof is direct and short).

\begin{theorem} \label{t:L(X)-c0-barrelled}
For a Tychonoff space $X$, the following assertions are equivalent:
\begin{enumerate}
\item[{\rm (i)}] {\rm (\cite{FKS-1})} $L(X)$ is $\ell_\infty$-barrelled;
\item[{\rm (ii)}] $L(X)$ is $\ell_\infty$-quasibarrelled;
\item[{\rm (iii)}] $L(X)$ is $c_0$-barrelled;
\item[{\rm (iv)}] $L(X)$ is $\aleph_0$-quasibarrelled;
\item[{\rm (v)}] $X$ is a $P$-space.
\end{enumerate}
\end{theorem}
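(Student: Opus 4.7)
My plan is to complete the cycle of equivalences by combining the Ferrando-K\c{a}kol-Saxon equivalence (i)$\Leftrightarrow$(v) with the implications from the Introduction's diagram---in particular (i)$\Rightarrow$(ii),(iii) and (iv)$\Rightarrow$(ii)---and supplying two new links: (v)$\Rightarrow$(iv) (positive direction) and a common negative link whereby the failure of (v) implies the failure of both (ii) and (iii).

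\emph{Positive direction.} Let $X$ be a $P$-space and $(\chi_n)\subset L(X)'=C(X)$ a countable strongly bounded family. Evaluation at points of $X$ gives pointwise boundedness, and the $P$-property promotes the envelope $\phi(x):=\sup_n|\chi_n(x)|$ to a continuous function: at $x_0$ with $\phi(x_0)<M$, each $W_n:=\{|\chi_n|<M\}$ is an open neighborhood of $x_0$, so $\bigcap_nW_n$ is open in the $P$-space. Moreover, a continuous real-valued function on a $P$-space has clopen level sets and is therefore locally constant, so the pseudometric $d(x,y):=\sup_n|\chi_n(x)-\chi_n(y)|$ is continuous and $(\chi_n)$ is uniformly $1$-Lipschitz for it. Using the standard description of the continuous seminorms of $L(X)$ as those attached to continuous pseudometrics on $X$ (Kantorovich-Rubinstein-type seminorms), one assembles from $\phi$ and $d$ a continuous seminorm on $L(X)$ dominating $(\chi_n)$, yielding equicontinuity. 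This establishes (v)$\Rightarrow$(iv); the same reasoning delivers the promised short direct proof of (v)$\Rightarrow$(i).

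\emph{Negative direction.} Suppose $X$ is not a $P$-space. By complete regularity there exist $x_0\in X$ and $f\in C(X,[0,1])$ with $f(x_0)=0$ and $f$ not vanishing on any neighborhood of $x_0$. The triangular bumps
$$\chi_n(x):=\max\bigl(0,\,1-|nf(x)-1|\bigr),\qquad n\in\NN,$$
satisfy $\chi_n\in C(X,[0,1])$, $\chi_n(x_0)=0$, and are pointwise null (if $f(x)>0$, then $\chi_n(x)=0$ as soon as $n>2/f(x)$). Hence $(\chi_n)$ is weak*-null and, being uniformly bounded, also strongly bounded in $L(X)'$. If it were equicontinuous, a continuous seminorm $p$ on $L(X)$ with $|\chi_n(u)|\le p(u)$ would yield a continuous function $d(x):=p(x-x_0)$ on $X$, vanishing at $x_0$, with $\chi_n(x)\le d(x)$. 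The set $\{d<1/2\}$ would then be a neighborhood of $x_0$ on which every $\chi_n<1/2$; but $f$ does not vanish there, so pick $y$ with $f(y)>0$ and an integer $n\in[1/(2f(y)),3/(2f(y))]$ (the interval has length $\ge 1$ since $f(y)\le 1$) to obtain $\chi_n(y)\ge 1/2$, a contradiction. So $L(X)$ is neither $c_0$-barrelled nor $\ell_\infty$-quasibarrelled.

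\emph{Main obstacle.} The principal technical step is producing, in the positive direction, a bona fide continuous seminorm on $L(X)$---not merely a continuous dominating function on $X$. The naive candidate $q_\phi(\sum a_ix_i):=\sum|a_i|\phi(x_i)$ is not continuous on $L(X)$ (for $x\ne x'$ one has $q_\phi(x-x')=\phi(x)+\phi(x')$, which jumps across the diagonal of $X\times X$ even if $\phi$ is continuous), so the seminorm must be constructed through the continuous pseudometric $d$ via the Kantorovich-Rubinstein formalism. The $P$-property is used twice---to force continuity of $\phi$ and, via local constancy, of $d$---and the bump construction confirms it is indispensable.
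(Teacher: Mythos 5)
Your proof of the equivalence with (iv) has a genuine gap. With the definition used in the paper, $\aleph_0$-quasibarrelledness requires that every $\beta(E',E)$-bounded subset of $E'$ which is a \emph{countable union of equicontinuous sets} be equicontinuous; such a set is in general uncountable, so your positive direction, which treats only a countable family $(\chi_n)$, proves (v)$\Rightarrow$(ii) and (v)$\Rightarrow$(i) but not (v)$\Rightarrow$(iv). Since the only other link you supply for (iv) is (iv)$\Rightarrow$(ii), assertion (iv) is shown to imply the rest but is never shown to follow from (v), and the cycle does not close. The repair needs Proposition \ref{p:equicontinuity-C(X)}: each piece $A_n$ of the union, being equicontinuous in $L(X)'$, is equicontinuous as a family of functions on $X$; the $P$-property then lets you intersect, at each point, the countably many neighbourhoods witnessing equicontinuity of the $A_n$, while pointwise boundedness follows from strong boundedness. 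A secondary weakness in the same direction: the step ``one assembles from $\phi$ and $d$ a continuous seminorm on $L(X)$ dominating $(\chi_n)$'' is exactly the technical heart and is asserted rather than carried out. It is true, but the clean route is Ra\u{\i}kov's description of the topology (Theorem \ref{t:topology-L(X)-Raikov}): once $S=\{\chi_n\}$ is pointwise bounded and equicontinuous as a family of functions on $X$ (which for a countable family is immediate from the $P$-property, with no need for continuity of the envelope $\phi$ or local constancy), the polar $S^\circ$ is a neighbourhood of zero in $L(X)$ and $S\subseteq S^{\circ\circ}$ is equicontinuous.

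Your negative direction, by contrast, is correct and genuinely different from the paper's. The paper first reduces to the zero-dimensional case by invoking an external result from the Mackey-problem paper and then uses characteristic functions of clopen sets; your single family of triangular bumps $\chi_n=\max\bigl(0,1-|nf-1|\bigr)$ built from one witness $f$ of the failure of the $P$-property handles an arbitrary non-$P$-space in one stroke, and the verifications that $(\chi_n)$ is weak-$\ast$ null, strongly bounded (via Proposition \ref{p:bounded-in-L(X)}), and non-equicontinuous are all sound. This buys a self-contained and more elementary proof of (ii)$\Rightarrow$(v) and (iii)$\Rightarrow$(v), at the cost of nothing; it is the positive side, and specifically (v)$\Rightarrow$(iv), that must be reworked.
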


We shall say that  $X$  is  a  {\em sequentially Ascoli space} if every convergent sequence in $\CC(X)$ is equicontinuous. Every metrizable space is a $\mu$-space and sequentially Ascoli (by the Ascoli theorem \cite{Eng}). In the next theorem we give  a  sufficient condition on $X$ such that $L(X)$ is a $c_0$-quasibarrelled space.  

\begin{theorem} \label{t:L(X)-c0-quasibarrelled}
Let $X$ be a $\mu$-space and a sequentially Ascoli space. Then $L(X)$ is a $c_0$-quasibarrelled space.
\end{theorem}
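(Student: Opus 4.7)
The plan is to show directly that every sequence $(\chi_n)\subseteq L(X)'=C(X)$ which is null in the strong topology $\beta(L(X)',L(X))$ is equicontinuous. First I reduce the input: since the canonical map $i\colon X\hookrightarrow L(X)$ is a topological embedding, every compact $K\subseteq X$ is bounded in $L(X)$, so strong convergence of $(\chi_n)$ forces $\chi_n\to 0$ in $\CC(X)$ and in particular $\chi_n(x)\to 0$ for every $x\in X$. As this is a convergent sequence in $\CC(X)$ and $X$ is sequentially Ascoli, the family $\{\chi_n:n\in\NN\}$ is equicontinuous at every point of $X$.

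The heart of the argument is to package these two facts through the universal property of $L(X)$. Define
\[
f\colon X\longrightarrow c_0,\qquad f(x)=(\chi_n(x))_{n\in\NN}.
\]
Pointwise nullity places $f(x)$ in $c_0$, and equicontinuity of $\{\chi_n\}$ at $x_0$ gives, for each $\eps>0$, a neighborhood $V$ of $x_0$ on which $\sup_n|\chi_n(y)-\chi_n(x_0)|\le\eps$; this says precisely $\|f(y)-f(x_0)\|_{c_0}\le\eps$ for $y\in V$, so $f$ is continuous. By the universal property of $L(X)$, $f$ extends to a continuous linear operator $\bar f\colon L(X)\to c_0$, and a direct computation on $v=\sum_i a_ix_i$ shows that the $n$-th coordinate of $\bar f(v)$ equals $\chi_n(v)$. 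Therefore
\[
p(v):=\|\bar f(v)\|_{c_0}=\sup_{n}|\chi_n(v)|
\]
is a continuous seminorm on $L(X)$; its unit ball $U_p=\{v:p(v)\le 1\}$ is a $0$-neighborhood and $\{\chi_n\}\subseteq U_p^{\circ}$ by construction, yielding the required equicontinuity.

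The step I expect to be the main conceptual hurdle is the $c_0$-assembly trick: one has to notice that an equicontinuous pointwise-null family on $X$ is really the same data as a single continuous $c_0$-valued map on $X$, which then lifts through the universal property to produce in one stroke a continuous seminorm of $L(X)$ dominating the whole sequence $(|\chi_n|)$. The $\mu$-space hypothesis is not visibly used in this sketch; sequential Ascoli carries the full weight of the proof, and the $\mu$-space assumption is presumably retained both to guarantee a well-behaved supply of bounded sets in $L(X)$ and to keep the theorem parallel with the metrizable examples advertised just before the statement.
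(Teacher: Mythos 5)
Your proof is correct, and it differs from the paper's in the final (and decisive) step. The paper argues: a strongly null sequence $S$ is null in $\CC(X)$ because the restriction map $L(X)'_\beta\to \CC(X)$ is continuous (Proposition \ref{p:strong-dual-of-L(X)}), hence $S$ is equicontinuous on $X$ by the sequentially Ascoli hypothesis, and then it invokes Ra\u{\i}kov's description of the topology of $L(X)$ (Theorem \ref{t:topology-L(X)-Raikov}) to conclude that $S^\circ$ is a neighborhood of zero, so $S\subseteq S^{\circ\circ}$ is equicontinuous in $L(X)'$. You reach the same intermediate point (strong null $\Rightarrow$ $\tau_k$-null because compact subsets of $X$ are bounded in $L(X)$; then equicontinuity on $X$ from the Ascoli hypothesis), but you replace the appeal to Ra\u{\i}kov's theorem by the $c_0$-assembly trick: packaging the pointwise-null equicontinuous sequence as a single continuous map $X\to c_0$ and lifting it through the universal property to get the continuous seminorm $\sup_n|\chi_n(\cdot)|$. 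This is a self-contained re-derivation, for sequences, of exactly the instance of Ra\u{\i}kov's theorem the paper uses, so it buys independence from that quoted result at the cost of a slightly longer argument; the paper's route is shorter given that Theorem \ref{t:topology-L(X)-Raikov} is already on the table. Your closing remark is also accurate: neither your argument nor, on inspection, the paper's actually uses the $\mu$-space hypothesis, since only the continuity of $R:L(X)'_\beta\to\CC(X)$ (valid for every Tychonoff $X$) is needed, not the topological isomorphism; the hypothesis is genuinely used elsewhere (e.g.\ in Proposition \ref{p:df-non-quasi-DF}) but is redundant for this statement.
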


Since every metrizable $P$-space is discrete, Theorems  \ref{t:L(X)-c0-barrelled} and   \ref{t:L(X)-c0-quasibarrelled} immediatelly imply
\begin{corollary} \label{c:L(X)-c0-barrelled-c0-quasibarrelled}
If $X$ is a non-discrete metrizable space, then $L(X)$ is  $c_0$-quasibarrelled but it is neither  $c_0$-barrelled nor $\ell_\infty$-quasibarrelled.
\end{corollary}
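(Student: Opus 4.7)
The plan is to verify the two asserted conclusions separately, invoking the two preceding theorems, and then check the little topological fact that a metrizable $P$-space is discrete (which is the one line the author calls ``immediate'').

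First, for the positive conclusion, I would invoke Theorem \ref{t:L(X)-c0-quasibarrelled}. Since $X$ is metrizable, it is a $\mu$-space (every closed functionally bounded subset of a metric space is compact, being closed and totally bounded, because any sequence without convergent subsequence gives a continuous unbounded real-valued function, contradicting functional boundedness). Moreover, by the classical Ascoli theorem every compact subset of $\CC(X)$ is equicontinuous for $X$ metric, so in particular every convergent sequence in $\CC(X)$ is equicontinuous and $X$ is sequentially Ascoli. Thus the hypotheses of Theorem \ref{t:L(X)-c0-quasibarrelled} are satisfied and $L(X)$ is $c_0$-quasibarrelled.

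Second, for the negative conclusions, I would show that a non-discrete metrizable space cannot be a $P$-space. Let $d$ be a compatible metric and take any $x\in X$. The open balls $B(x,1/n)$, $n\in\NN$, form a countable family of open sets whose intersection is $\{x\}$. If $X$ were a $P$-space, this intersection would be open, so $\{x\}$ would be open for every $x\in X$ and $X$ would be discrete, a contradiction. Hence $X$ is not a $P$-space, and Theorem \ref{t:L(X)-c0-barrelled} ((v)$\Rightarrow$(ii),(iii) failing) yields that $L(X)$ is neither $c_0$-barrelled nor $\ell_\infty$-quasibarrelled.

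There is no real obstacle here; the content of the corollary is entirely absorbed into Theorems \ref{t:L(X)-c0-barrelled} and \ref{t:L(X)-c0-quasibarrelled}, and the only separate thing to notice is the elementary topological observation that a countable intersection of balls shrinking to a point forces $P$-spaces among metric spaces to be discrete. The work, if any, is hidden in the two theorems being invoked.
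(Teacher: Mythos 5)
Your proposal is correct and follows exactly the paper's route: the corollary is deduced from Theorems \ref{t:L(X)-c0-barrelled} and \ref{t:L(X)-c0-quasibarrelled} together with the observation that a non-discrete metrizable space is not a $P$-space (and that metrizable spaces are $\mu$-spaces and sequentially Ascoli, which the paper states just before Theorem \ref{t:L(X)-c0-quasibarrelled}). The only blemish is the parenthetical ``closed and totally bounded'' justification for the $\mu$-space claim --- in a non-complete metric space that would not suffice --- but the argument you give in the same sentence (a sequence with no convergent subsequence yields an unbounded continuous function) is the correct one, so nothing is missing.
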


We provide also Tychonoff spaces $X$ for which the space $L(X)$ is not $c_0$-quasibarrelled, see Proposition \ref{p:L(X)-not-c0-quasibarrelled} and Example \ref{exa:non-c0-quasibarrelled-1}.

Now we consider $(DF)$-like properties from the lower row of the diagram.
The following result shows that the condition  for $L(X)$ of being a (almost) $(DF)$-space is too strong.
\begin{theorem} \label{t:L(X)-DF-space}
For a Tychonoff space $X$ the following assertions are equivalent:
\begin{enumerate}
\item[{\rm (i)}] $L(X)$ is a $(DF)$-space;
\item[{\rm (ii)}] $L(X)$ is an almost $(DF)$-space;
\item[{\rm (iii)}]  $X$ is a countable discrete space.
\end{enumerate}
\end{theorem}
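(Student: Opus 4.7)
The implication (i)$\Rightarrow$(ii) is immediate from the diagram of weak barrelledness conditions. For (iii)$\Rightarrow$(i), if $X$ is a countable discrete space then every function on $X$ is continuous, so the universal property of $L(X)$ forces every linear map from $L(X)$ into any locally convex space to be continuous; hence $L(X)$ carries the finest locally convex topology on $\RR^{(X)}$, i.e.\ $L(X)=\bigoplus_{x\in X}\RR$. In the countably infinite case this is precisely the strong dual of the Fr\'echet space $\RR^{\NN}$, a standard example of a $(DF)$-space; the finite case is trivial.

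The content lies in (ii)$\Rightarrow$(iii). The $\ell_\infty$-quasibarrelled hypothesis together with Theorem~\ref{t:L(X)-c0-barrelled} immediately yields that $X$ is a $P$-space. Let $(B_n)$ be a fundamental bounded sequence in $L(X)$. Since the canonical map $X\hookrightarrow L(X)$ is an embedding and each singleton $\{x\}$ is bounded, every $x\in X$ lies in some $B_n$; that is,
\[
X=\bigcup_{n\in\NN}(B_n\cap X).
\]
Moreover, via the universal property every $f\in C(X)$ extends to a functional in $L(X)'$ that is bounded on each $B_n$, so $B_n\cap X$ is a functionally bounded subset of $X$.

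The plan then reduces to the key lemma that \emph{every functionally bounded subset of a Tychonoff $P$-space is finite}. Granting it, each $B_n\cap X$ is finite, whence $X$ is countable; and any countable Tychonoff $P$-space is discrete because for each $x\in X$ the singleton $\{x\}=\bigcap_{y\ne x}(X\setminus\{y\})$ is a countable intersection of open sets, hence open in a $P$-space.

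I expect this lemma (presumably folklore) to be the main obstacle. My plan for it is to take a sequence $(x_n)$ of distinct points in a putative infinite functionally bounded set and exploit two characteristic properties of Tychonoff $P$-spaces, namely that countable sets are closed and that every $G_\delta$ is open, in order to produce pairwise disjoint clopen neighborhoods $C_n\ni x_n$: by Tychonoff separation one has $h_n\in C(X,[0,1])$ with $h_n(x_n)=1$ and $h_n\equiv 0$ on $\{x_m:m\ne n\}$, and in the $P$-space $h_n^{-1}(\{1\})$ is simultaneously a $G_\delta$ and has $F_\sigma$ complement, hence is clopen. After the trivial shrinking $C_n\mapsto C_n\setminus\bigcup_{m<n}C_m$ the locally constant function equal to $n$ on $C_n$ and $0$ off $\bigcup_n C_n$ (whose support is clopen because its complement is a $G_\delta$ in the $P$-space) is continuous and unbounded on $\{x_n\}$, contradicting functional boundedness.
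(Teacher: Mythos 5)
Your proposal is correct and follows essentially the same route as the paper: (ii)$\Rightarrow$(iii) is reduced to the facts that $X$ is a $P$-space (via Theorem~\ref{t:L(X)-c0-barrelled}), that the fundamental bounded sequence yields a countable cover of $X$ by functionally bounded sets, and that every functionally bounded subset of a Tychonoff $P$-space is finite, which is exactly the paper's Lemma~\ref{l:bounded-in-P-space}. Your self-contained proof of that lemma (clopen $G_\delta$-neighborhoods and an unbounded locally constant function) is sound and only cosmetically different from the paper's argument via Lemma~11.7.1 of \cite{Jar}.
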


In \cite[Example 4.10]{FGK-quasi-DF} we constructed  a countable Tychonoff space $X$ such that $L(X)$ is a quasi-$(DF)$-space but not a $(DF)$-space.
In Example \ref{exa:non-c0-quasibarrelled} below we considerably strengthen the conclusion of this example by showing that $L(X)$ is not even a $c_0$-quasibarrelled space. On the other hand, in Proposition \ref{p:df-non-quasi-DF}, we prove that if $K$ is a non-metrizable compact space, then $L(K)$ is a $(df)$-space but not a quasi-$(DF)$-space. Therefore the notions of quasi-$(DF)$-spaces and $(df)$-spaces are different in the class of free locally convex spaces.

Below we characterize the Grothendieck property for free lcs over $\mu$-spaces.

\begin{theorem} \label{t:L(X)-Grothendieck}
Let $X$ be a $\mu$-space. Then $L(X)$ has the Grothendieck property if and only if every compact subset of $X$ is finite.
\end{theorem}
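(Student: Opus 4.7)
The plan is to recast the Grothendieck property in concrete analytic terms and then argue each direction.

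\textbf{Reformulation.} Since $X$ is a $\mu$-space, the bounded subsets of $L(X)$ are precisely those contained in $M\cdot\mathrm{acx}(K\cup -K)$ for some compact $K\subseteq X$ and $M>0$, so $L(X)'_\beta = C_k(X)$. Its dual $L(X)''=C_k(X)'$ identifies with the space $M_c(X)$ of compactly supported signed Radon measures on $X$ via the Riesz representation theorem on each $C(K)$. Consequently, $L(X)$ has the Grothendieck property iff every pointwise null sequence $\{f_n\}\subseteq C(X)$ satisfies $\int f_n\,d\mu\to 0$ for every $\mu\in M_c(X)$. Since every such $\mu$ is supported on some compact $K$, and weak convergence in the Banach space $C(K)$ is equivalent to pointwise convergence together with uniform boundedness, this reduces to the clean statement: \emph{every pointwise null sequence in $C(X)$ is uniformly bounded on each compact $K\subseteq X$}. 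The ``if'' direction is then immediate, since pointwise convergence at finitely many points yields a uniform bound.

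\textbf{Hard direction.} Suppose $K\subseteq X$ is an infinite compact set; I build a pointwise null $\{f_n\}\subseteq C(X)$ which is not uniformly bounded on $K$. Pick a countable discrete subspace $D=\{x_n\}\subseteq K$ (exists because $K$ is infinite Hausdorff) with accumulation set $A=\overline D\setminus D$, disjoint from $D$ by discreteness. Using normality of compact Hausdorff $K$, construct open $V_n\subseteq K$ inductively so that $x_n\in V_n$ and the closures $\overline{V_n}^K$ are pairwise disjoint and each disjoint from $\overline D\setminus\{x_n\}$. Urysohn's lemma in $K$ yields $\psi_n\in C(K,[0,1])$ with $\psi_n(x_n)=1$ and $\mathrm{supp}\,\psi_n\subseteq V_n$. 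Using the $C^*$-embedding of $K$ in the Tychonoff space $X$, extend each $\psi_n$ to $\tilde\psi_n\in C(X,[0,1])$ having support in a pairwise disjoint family of open neighborhoods $\tilde V_n\supseteq\overline{V_n}^K$ in $X$ (this is the delicate step; see below). Setting $f_n=n^2\tilde\psi_n$, we get $f_n(x_n)=n^2$, so $\sup_n\sup_{x\in K}|f_n(x)|=\infty$, and pairwise disjointness of the $\tilde V_n$ gives $f_n\to 0$ pointwise on all of $X$. Finally, the measure $\mu=\sum_n n^{-2}\delta_{x_n}\in M(\overline D)\subseteq M_c(X)$ satisfies $\mu(f_n)=1$ for every $n$, witnessing the failure of the Grothendieck property.

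\textbf{Main obstacle.} The delicate step is producing the pairwise disjoint open neighborhoods $\tilde V_n\supseteq\overline{V_n}^K$ in $X$. Inside $K$ this follows from normality by the inductive argument indicated; lifting to $X$ requires separating countably many pairwise disjoint compact subsets by pairwise disjoint open sets in a regular space, which is done inductively: at stage $n$, $\overline{V_n}^K$ is compact and disjoint from the already-built finite closed set $\overline{\tilde V_1}\cup\cdots\cup\overline{\tilde V_{n-1}}$, so regularity of $X$ provides the required $\tilde V_n$. Care must be taken so that each $\overline{\tilde V_m}$ only meets $K$ inside a small neighborhood of $\overline{V_m}^K$ disjoint from all other $\overline{V_n}^K$'s; this is arranged by shrinking $\tilde V_m$ inside a preliminary $K$-neighborhood $W_m\supseteq\overline{V_m}^K$ chosen at step $m$ so that $\overline{W_m}^K\cap\overline{V_n}^K=\emptyset$ for $n\le m$, and then using the Tychonoff structure of $X$ to realize $\tilde V_m$ inside $W_m\cup(X\setminus K)$.
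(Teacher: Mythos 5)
Your overall strategy is the same as the paper's: identify $L(X)'_\beta$ with $\CC(X)$ (and hence $L(X)''$ with $M_c(X)$), note that weak-$*$ convergence in $L(X)'=C(X)$ is pointwise convergence, and, for the hard direction, manufacture from an infinite compact $K$ a pointwise null sequence of disjointly supported bumps with heights tending to infinity, detected by a summable combination of Dirac measures. Your reformulation (Grothendieck property $\Leftrightarrow$ every pointwise null sequence in $C(X)$ is uniformly bounded on each compact set) is correct and a nice explicit version of what the paper uses implicitly, and the ``if'' direction is fine.

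The gap is exactly at the step you flag as delicate. For the induction producing the $\tilde V_n$ to run, each already-built $\overline{\tilde V_m}$ must avoid $C_n:=\overline{V_n}^K$ for \emph{all} $n>m$, i.e.\ $\overline{W_m}^K$ must miss $\bigcup_{n\ne m}C_n$; your patch only imposes $\overline{W_m}^K\cap C_n=\emptyset$ for $n\le m$ and says nothing about the infinitely many future constraints. These cannot in general be met: $\bigcup_{n\ne m}C_n$ is an infinite union of pairwise disjoint compacta, it need not be closed, and nothing in your first induction prevents its closure from meeting $C_m$. For instance, in $K=[0,1]^2$ with $x_1=(0,1)$, $x_n=(1/n,0)$ and $V_1$ a half-ball around $x_1$, one may legally take $V_n$ ($n\ge 2$) to be a tiny ball around $x_n$ together with a tiny ball around a point just outside $\overline{V_1}^K$ chosen so that these extra pieces cluster at a boundary point of $\overline{V_1}^K$; all your stated conditions hold, yet every open set containing $C_1$ meets infinitely many $C_n$, so no pairwise disjoint family $\tilde V_n\supseteq C_n$ exists. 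The difficulty is self-inflicted: you only ever evaluate $\tilde\psi_n$ at the points $x_m$ and need $f_n\to0$ pointwise, so it suffices to find pairwise disjoint open sets $U_n\ni x_n$ in $X$ and to define $f_n$ directly by complete regularity of $X$ (with $f_n(x_n)=n^2$ and $f_n=0$ off $U_n$), skipping the $\psi_n$ and the extension entirely. For \emph{points} the separation induction does close, because $\overline{D}\setminus\{x_j\}$ is a single closed set missing $x_j$, so $\overline{U_j}$ can be chosen to avoid all other $x_n$ at once; this is precisely Lemma~11.7.1 of Jarchow's book, which the paper invokes to obtain the $U_n$ in one line.
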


It turns out that free locally convex spaces always have the $(DP)$-property.

\begin{theorem} \label{t:L(X)-DP-property}
For every Tychonoff space $X$, the space $L(X)$ has the $(DP)$-property.
\end{theorem}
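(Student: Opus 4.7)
The plan is to combine the universal property of $L(X)$ with a structural lemma on absolutely convex weakly compact subsets of $L(X)$, and then invoke Mazur's theorem on closed absolutely convex hulls of compact sets in a quasi-complete locally convex space.

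Fix a continuous linear operator $T\colon L(X)\to F$ with $F$ quasi-complete such that $T$ sends bounded subsets of $L(X)$ to relatively weakly compact subsets of $F$, and let $K\subseteq L(X)$ be absolutely convex and weakly compact. The goal is to show that $T(K)$ is relatively compact in $F$. By the universal property of $L(X)$, the composition $f:=T\circ i\colon X\to F$ is continuous, where $i\colon X\to L(X)$ is the canonical embedding. Since every compact subset of $X$ is bounded in $L(X)$, the hypothesis on $T$ already implies that $f$ sends compact subsets of $X$ to compact subsets of $F$.

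The central step is a structural lemma: for every absolutely convex weakly compact subset $K$ of $L(X)$ one should exhibit a compact set $Z\subseteq X$ and a scalar $N>0$ with
\[
K\;\subseteq\; N\cdot\overline{\acx}\bigl(i(Z)\bigr),
\]
the closure being taken in $L(X)$ (equivalently, by Mazur's theorem, in the weak topology $\sigma(L(X),C(X))$). The point is that $i$ is a topological embedding of $X$ into $L(X)$ even with respect to $\sigma(L(X),C(X))$, so weak compactness of $K$ (strictly stronger than boundedness) should force the ``support'' of $K$ to be contained in a compact subset of $X$, rather than in a merely functionally bounded one.

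Granting the lemma, the conclusion is immediate. Since $Z\subseteq X$ is compact and $f$ is continuous, $f(Z)$ is compact in $F$. As $F$ is quasi-complete, Mazur's theorem guarantees that $\overline{\acx}\,f(Z)$ is compact in $F$. Using the continuity of $T$ together with the structural inclusion,
\[
T(K)\;\subseteq\; N\cdot T\bigl(\overline{\acx}\bigl(i(Z)\bigr)\bigr)\;\subseteq\; N\cdot\overline{\acx}\,f(Z),
\]
so $T(K)$ sits inside a compact subset of $F$ and is therefore relatively compact, as required. The main obstacle is the structural lemma: naively appealing to a characterization of bounded subsets of $L(X)$ places the support of $K$ only in a functionally bounded $A\subseteq X$, and this, combined with Krein's theorem and the hypothesis on $T$, would only yield that $T(K)$ is relatively weakly compact, which is insufficient for the $(DP)$ property. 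The crux is to upgrade from ``functionally bounded support'' to ``compact support'' by exploiting the weak compactness of $K$ and the fact that the embedding $X\hookrightarrow L(X)$ remains topological in the weak topology.
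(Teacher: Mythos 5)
Your reduction is sound in outline, but the proof has a genuine gap exactly where you locate ``the main obstacle'': the structural lemma is asserted, not proved, and it carries the entire content of the theorem. The claim that every absolutely convex weakly compact $K\subseteq L(X)$ satisfies $K\subseteq N\cdot\overline{\acx}(i(Z))$ for some \emph{compact} $Z\subseteq X$ does not follow from boundedness of $K$ together with the fact that $X\hookrightarrow (L(X),\sigma(L(X),C(X)))$ is a topological embedding: Proposition \ref{p:bounded-in-L(X)} only places $\supp(K)$ in a functionally bounded set, and upgrading ``functionally bounded'' to ``relatively compact in $X$'' is precisely the hard step. Your heuristic that ``weak compactness should force compact support'' is in fact false without absolute convexity: for the Mr\'{o}wka space $\Psi$ (pseudocompact, with $\omega$ dense and not relatively compact), the set $S=\{0\}\cup\{\tfrac{1}{n+1}\,i(n):n\in\omega\}$ is weakly compact in $L(\Psi)$ because $\tfrac{1}{n+1}f(n)\to 0$ for every (necessarily bounded) $f\in C(\Psi)$, yet $\supp(S)=\omega$ has non-compact closure. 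So any correct proof of your lemma must exploit absolute convexity in an essential way --- e.g.\ by comparing weak closures in $L(X)$ with weak closures in the completion $M_c(\mu X)$ and showing that otherwise $\overline{\acx}(K)$ picks up non-finitely-supported measures --- and you give no such argument.

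The paper avoids proving this lemma by hand. It invokes Edwards' characterization (Theorem \ref{t:Edwards-DP}), reducing the $(DP)$ property to precompactness of absolutely convex weakly compact sets in the auxiliary topology $\tau_{\Sigma'}$; it shows $\tau_{\Sigma'}\leq\pmb{\nu}_X$ via Proposition \ref{p:equicontinuity-C(X)} and Ra\u{\i}kov's description of the topology (Theorem \ref{t:topology-L(X)-Raikov}); and it quotes the external result (Theorem 1.2 of \cite{Gab-Respected}) that every absolutely convex weakly compact subset of $L(X)$ is already compact in the original topology. That cited theorem is the exact analogue of your missing lemma. If you import it (or reprove it), your deduction does close: $f=T\circ i$ is continuous, $f(Z)$ is compact, $\overline{\acx}\,f(Z)$ is compact in the quasi-complete space $F$, and $T(K)\subseteq N\cdot\overline{\acx}\,f(Z)$ --- note that your argument then never uses the hypothesis that $T$ maps bounded sets to relatively weakly compact sets, which is harmless since you prove a stronger conclusion. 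As written, however, the proposal is a strategy with its central claim unestablished rather than a proof.
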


Albanese, Bonet and Ricker (\cite[Corollary~3.4]{ABR}) generalized the above mentioned Grothendieck's result by  proving  that the $(DP)$ property and the $(sDP)$ property coincide also for the class of Fr\'{e}chet spaces (or, even more generally, for  strict $(LF)$-spaces). In \cite[Proposition~3.3]{ABR} they showed  that every barrelled quasi-complete space with the  $(DP)$ property has also the  $(sDP)$ property. Since $L(X)$ is a Mackey space only for the trivial case of discrete $X$ (see \cite{Gabr-L(X)-Mackey}), one can expect that the $(DP)$ property differs from the $(sDP)$ property in the class of free lcs. This is indeed so as the following theorem shows.

\begin{theorem} \label{t:L(X)-sDP-property}
Let $X$ be a sequential $\mu$-space. Then $L(X)$ has the $(sDP)$-property if and only if $X$ is discrete.
\end{theorem}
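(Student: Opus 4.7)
The implication $X$ discrete $\Rightarrow$ $L(X)$ has $(sDP)$ is short: $L(X)\cong\bigoplus_{x\in X}\RR$ is a locally convex direct sum of copies of $\RR$, hence barrelled and complete, and by Theorem~\ref{t:L(X)-DP-property} it has the $(DP)$-property, so Proposition~3.3 of \cite{ABR} delivers the $(sDP)$-property.

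For the converse I argue the contrapositive: assuming $X$ is a sequential $\mu$-space which is not discrete, I will exhibit weakly null sequences $\{z_n\}\subset L(X)$ and $\{\chi_n\}\subset L(X)'_\beta$ with $\chi_n(z_n)\not\to 0$. Non-discreteness combined with sequentiality yields a non-trivial convergent sequence $x_n\to x_\infty$ in $X$; since distinct limits of a Hausdorff-convergent sequence are impossible, we may pass to a subsequence whose terms are pairwise distinct and different from $x_\infty$. Setting $z_n:=x_n-x_\infty\in L(X)$ and using $L(X)'=C(X)$, continuity of each $f\in C(X)$ at $x_\infty$ gives $\langle z_n,f\rangle=f(x_n)-f(x_\infty)\to 0$, so $\{z_n\}$ is weakly null.

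The companion sequence $\{\chi_n\}\subset C(X,[0,1])$ will be a ``hat'' sequence with pairwise disjoint supports. Complete regularity provides, for each $n$, an open $V_n\ni x_n$ whose closure avoids the compact set $\{x_m:m\neq n\}\cup\{x_\infty\}$; the inductive replacement $V_n\mapsto V_n\setminus\bigcup_{k<n}\overline{V_k}$ (legal because $x_n\notin\overline{V_k}$ for $k<n$ by the initial choice) makes the $V_n$ pairwise disjoint, and another application of complete regularity yields $\chi_n\in C(X,[0,1])$ with $\chi_n(x_n)=1$ and $\chi_n\equiv 0$ on $X\setminus V_n$. Disjointness of supports forces $\chi_n\to 0$ pointwise on all of $X$ (each point meets at most one $V_n$), while $|\chi_n|\le 1$ and $\chi_n(z_n)=\chi_n(x_n)-\chi_n(x_\infty)=1$ for every $n$.

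It remains to check that $\{\chi_n\}$ is weakly null in $L(X)'_\beta$. Here the $\mu$-space hypothesis enters to identify $L(X)'_\beta$ topologically with $\CC(X)$, an identification resting on the standard description of bounded subsets of $L(X)$ as lying in $C\cdot\acx(K)$ for some compact $K\subset X$ and $C>0$. The continuous dual of $\CC(X)$ is the space of compactly supported signed Radon measures, and for any such $\mu$ with compact support $K$, dominated convergence applied to $\chi_n|_K$ (uniformly bounded by $1$ and pointwise null) yields $\int\chi_n\,d\mu\to 0$; hence $\{\chi_n\}$ is weakly null in $L(X)'_\beta$. Combined with $\chi_n(z_n)=1$, this contradicts $(sDP)$, so $X$ must be discrete. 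The main technical obstacle is the identification $L(X)'_\beta=\CC(X)$ for $\mu$-spaces and the resulting Grothendieck-style description of weak nullness by uniform boundedness plus pointwise nullness; the construction of pairwise-disjoint hats and the verification that $\{z_n\}$ is weakly null are comparatively routine.
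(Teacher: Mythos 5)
Your proposal is correct and follows essentially the same route as the paper: for the non-trivial direction you build the same pair of sequences --- $z_n=x_n-x_\infty$ weakly null in $L(X)$ and disjointly supported hat functions $\chi_n$ with $\chi_n(x_n)=1$ --- and verify weak nullness of $\{\chi_n\}$ via the identification $L(X)'_\beta=\CC(X)$ (where the $\mu$-space hypothesis enters) together with dominated convergence against measures in $M_c(X)$, exactly as in the paper. The only deviation is in the easy direction, where you derive $(sDP)$ from barrelledness, completeness and the $(DP)$-property via \cite[Proposition~3.3]{ABR}, while the paper instead uses the Schur property of the strong dual $\IR^{|X|}$; both arguments are valid.
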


In particular, if $X$ is a non-discrete metrizable space, then $L(X)$ has the $(DP)$-property but it does not have the $(sDP)$-property.


%
The paper is organized as follows. In Section \ref{seq:Free}  we give, among others, a new description of the topology of free locally convex spaces (Theorem \ref{t:topology-V(X)}) and characterize bounded subsets of free lcs (Proposition \ref{p:bounded-in-L(X)}). Theorems \ref{t:L(X)-c0-barrelled}-\ref{t:L(X)-sDP-property} are proved in Section \ref{seq:Main}.


\section{Description of the topology of free topological vector spaces and free locally convex spaces} \label{seq:Free}


We start from some necessary definitions and notations.  Set $\NN:=\{ 1,2,\dots\}$, $\w:=\{ 0,1,\dots\}$ and  $\IR_{>0}=(0,\infty)$.
The closure of a subset $A$ of a Tychonoff space $X$ is denoted by $\overline{A}$ or $\cl(A)$. 
The support of a function $f\in C(X)$ is denoted by $\supp(f)$.  The characteristic function of a subset $E$ of $X$ is denoted by  $\mathbf{1}_E$. Recall that the sets
\[
[K;\e] :=\{ f\in C(X): |f(x)|<\e \; \mbox{ for all } x\in K\},
\]
where $K$ is a compact subset of $X$ and $\e>0$, form a base at zero of the compact-open topology $\tau_k$ of $\CC(X)$.
A subset $A$ of $X$ is called {\em functionally bounded in $X$} if every $f\in C(X)$ is bounded on $A$; $X$ is said to be a {\em $\mu$-space} if every functionally bounded set in $X$ has compact closure. It is well known that every Dieudonn\'{e} complete space is a $\mu$-space. Recall that a Tychonoff space $X$ is {\em Dieudonn\'{e} complete} if the universal uniformity $\U_X$ on $X$ is complete.  For numerous characterizations of Dieudonn\'{e} complete spaces see Section 8.5.13 of \cite{Eng}. The {\em Dieudonn\'{e} completion} $\mu X$ of $X$ is the completion of the uniform space $(X,\U_X)$.

Let $E$ be a locally convex space. The dual space $E'$  of $E$ endowed with the  weak-$\ast$ topology  $\sigma(E',E)$ and the strong topology $\beta(E',E)$ on $E'$ is denoted by $E'_w$ and $E'_\beta$, respectively. The polar of a subset $A$ of $E$ is denoted by
\[
A^\circ :=\{ \chi\in E': |\chi(x)|\leq 1 \, \mbox{ for all } x\in A\}.
\]

Following \cite{GM}, the {\em free topological vector space} $\VV(X)$ over a Tychonoff space $X$ is a pair consisting of a topological vector space $\VV(X)$ and a continuous map $i=i_X: X\to \VV(X)$ such that every continuous map $f$ from $X$ to a topological vector space $E$ gives rise to a unique continuous linear operator ${\bar f}: \VV(X) \to E$ with $f={\bar f} \circ i$. Theorem 2.3 of \cite{GM} shows that for all Tychonoff spaces $X$, $\VV(X)$ exists, is unique up to isomorphism of topological vector spaces and is Hausdorff. For every Tychonoff space $X$, the set $X$ forms a Hamel basis  and  the map $i$ is a topological embedding for both spaces $\VV(X)$ and $L(X)$ (see \cite{GM,Rai}), so we shall identify $x\in X$ with its image $i(x)$.
If $D$ is a countably infinite discrete space, then  $L(D)$ is topologically isomorphic to the direct sum of a countably infinite family of the real line $\IR$ and denoted by $\phi$.

Denote by $\pmb{\mu}_X$ and $\pmb{\nu}_X$ the topology of $\VV(X)$ and $L(X)$, respectively. So $\VV(X) =(\VV_X, \pmb{\mu}_X)$ and $L(X)=(\VV_X, \pmb{\nu}_X)$, where $\VV_X$ is a vector space with a basis $X$.
In Theorem 1' of \cite{Rai}, Ra\u{\i}kov obtained the following description of the topology $\pmb{\nu}_X$ of $L(X)$.
\begin{theorem}[\cite{Rai}] \label{t:topology-L(X)-Raikov}
Let $X$ be a Tychonoff space. Then the sets of the form
\[
S^\circ :=\{ \chi\in L(X): |\chi(f)|\leq 1 \mbox{ for every } f\in S\},
\]
where $S$ is a pointwise bounded and equicontinuous subset of $C(X)$, form a basis of neighborhoods of $\pmb{\nu}_X$.
\end{theorem}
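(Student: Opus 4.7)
The plan is to construct directly on the free vector space $\VV_X$ a locally convex topology $\tau$ whose base of absolutely convex neighborhoods of zero consists exactly of the polars $S^\circ$ with $S\subseteq C(X)$ pointwise bounded and equicontinuous, and then show that $(\VV_X,\tau)$ satisfies the universal property defining $L(X)$. Uniqueness of $L(X)$ will force $\tau=\pmb{\nu}_X$, whence the claim.

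First I would verify that $\{S^\circ\}$ generates a Hausdorff locally convex vector topology. Each $S^\circ$ is absolutely convex by construction; it is absorbing because, for $\chi=\sum_{i=1}^n a_i x_i\in\VV_X$, pointwise boundedness of $S$ on the finite set $\{x_1,\dots,x_n\}$ gives an $M$ with $|\la f,\chi\ra|\le M\sum|a_i|$ for all $f\in S$, so $\chi\in\lambda S^\circ$ for large $\lambda$. The family is closed under finite intersections via $(S_1\cup S_2)^\circ=S_1^\circ\cap S_2^\circ$ and under positive rescalings of $S$ (which preserves both conditions); Hausdorffness follows since the pairing with $C(X)$ separates points of $\VV_X$. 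Continuity of the inclusion $i:X\to(\VV_X,\tau)$ is then immediate from equicontinuity: for $x_0\in X$, the preimage $i^{-1}(x_0+S^\circ)=\{x\in X:|f(x)-f(x_0)|\le 1\ \text{for all}\ f\in S\}$ is a neighborhood of $x_0$ precisely because $S$ is equicontinuous at $x_0$.

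The decisive step is the universal property. Given a continuous $f:X\to E$ into an lcs $E$, I want the linear extension $\bar f:\VV_X\to E$, $\bar f(\sum a_i x_i)=\sum a_i f(x_i)$, to be $\tau$-continuous. Take an absolutely convex closed neighborhood $U$ of zero in $E$; the bipolar theorem gives $U=(U^\circ)^\circ$ with $U^\circ\subseteq E'$ equicontinuous. Set $S:=\{\phi\circ f:\phi\in U^\circ\}\subseteq C(X)$. Then $S$ is pointwise bounded because $U$ absorbs each $f(x)$, and equicontinuous at any $x_0\in X$: given $\e>0$, continuity of $f$ provides a neighborhood $V$ of $x_0$ with $f(V)\subseteq f(x_0)+\e U$, so $|(\phi\circ f)(x)-(\phi\circ f)(x_0)|\le\e$ uniformly in $\phi\in U^\circ$. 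A direct calculation, using $\phi(\bar f(\chi))=\la\phi\circ f,\chi\ra$, yields $\bar f^{-1}(U)=S^\circ$, proving continuity.

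The main obstacle is exactly this pullback step: translating the continuity of $f$ together with the structural data of a neighborhood in $E$ into the twin conditions of pointwise boundedness and equicontinuity of $S$. This is precisely what forces these two conditions to appear in Ra\u{\i}kov's description. Once the universal property is in place, uniqueness of the free locally convex space identifies $(\VV_X,\tau)$ with $L(X)$, so $\tau=\pmb{\nu}_X$ and the polars $S^\circ$ form the desired base of neighborhoods of zero in $\pmb{\nu}_X$.
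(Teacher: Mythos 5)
Your argument is correct, but note that the paper does not prove this statement at all: it is imported verbatim from Ra\u{\i}kov's paper (Theorem 1$'$ of \cite{Rai}) and used as a black box, so there is no in-paper proof to compare against. What you give is the standard derivation of Ra\u{\i}kov's description: one checks that the polars $S^\circ$ of pointwise bounded equicontinuous families form a base of absolutely convex absorbing sets for a Hausdorff locally convex topology $\tau$ on $\VV_X$ into which $X$ embeds continuously, and then verifies the universal property by pulling back a closed absolutely convex neighborhood $U$ of zero in the target through $S=\{\phi\circ f:\phi\in U^\circ\}$, so that uniqueness of the free object forces $\tau=\pmb{\nu}_X$. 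All the individual steps are sound: absorbency from pointwise boundedness on finite supports, stability of the family under unions and scalings, Hausdorffness from the fact that $C(X)$ separates points of $\VV_X$ (finite subsets of a Tychonoff space are functionally separated), continuity of $i$ from equicontinuity of $S$, and the identity $\bar f^{-1}(U)=S^\circ$ via the bipolar theorem (using that a closed absolutely convex $U$ is weakly closed, so $U=U^{\circ\circ}$). The only point worth spelling out at the end is that the canonical isomorphism provided by uniqueness restricts to the identity on $X$, hence is the identity on all of $\VV_X$ since $X$ is a spanning set, which is what literally yields $\tau=\pmb{\nu}_X$ rather than a mere isomorphism. It is also worth observing that the paper's own contribution in this direction is the different, uniformity-based description of $\pmb{\mu}_X$ and $\pmb{\nu}_X$ in Theorem \ref{t:topology-V(X)}, whose proof proceeds by exhibiting an explicit neighborhood base $\sum_n V_n+\sum_n\frac{1}{\phi_n}X$ and comparing topologies directly; your duality-based route and that construction are complementary rather than competing.
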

A description of the topology $\pmb{\mu}_X$ of $\VV(X)$ for a uniform space $X$ is given in Section 5 of \cite{BL}. In the next theorem we describe the topologies $\pmb{\mu}_X$ and $\pmb{\nu}_X$ for any Tychonoff space $X$. First we explain our notations and construction.

Let $X$ be a Tychonoff space. Take a balanced and absorbent neighborhood $W$ of zero in $\VV(X)$ and choose  a sequence $\{ W_n\}_{n\in\NN}$ of balanced and absorbent neighborhoods of zero in $\VV(X)$ such that $W_1 +W_1 \subseteq W$ and $W_{n+1}+W_{n+1} \subseteq W_n$ for every $n\in \NN$.   For every $n\in\NN$ and each $x\in X$, choose a function $\phi_n \in \IR^X_{>0}$ such that $W_n$ contains a subset of the form
\[
S_n :=\bigg\{ t x: x \in X \mbox{ and } |t| \leq \frac{1}{\phi_n(x)} \bigg\}.
\]
Then $W$ contains a subset of the form
\begin{equation} \label{equ:topology-V-L-1}
\begin{aligned}
\sum_{n\in\NN} \frac{1}{\phi_n} X & =\sum_{n\in\NN} S_n := \bigcup_{m\in\NN} \big( S_1 +\cdots +S_m\big)\\
& =\bigcup_{m\in\NN} \left\{ \sum_{n=1}^m t_n x_n: x_n \in X \mbox{ and } |t_n| \leq \frac{1}{\phi_n(x_n)}  \mbox{ for all } n\leq m \right\}.
\end{aligned}
\end{equation}
If the space $X$ is discrete,  Protasov showed in \cite{Prot} that the family $\mathcal{N}_X$ of all subsets of $\VV_X$ of the form $\sum_{n\in\NN} \frac{1}{\phi_n} X$ is a base at zero $\mathbf{0}$ for $\pmb{\mu}_X$, and the family $\mathcal{\widehat{N}}_X :=\{ \conv(V): V\in \mathcal{N}_X\}$ is a base  at $\mathbf{0}$ for $\pmb{\nu}_X$ (where $\conv(V)$ is the convex hull of $V$). If $X$ is arbitrary, observe that  every $W_n$ defines the entourage $V_n :=\{ (x,y): x-y \in W_n\}$ of the universal uniformity $\UU_X$  of the uniform space $X$. Therefore $W$  contains a subset of the form
\begin{equation} \label{equ:topology-V-L-2}
\sum_{n\in\NN} V_{n} := \bigcup_{m\in\NN} \left\{ \sum_{n=1}^m t_n (x_n-y_n): |t_n|\leq 1 \mbox{ and } (x_n,y_n)\in  V_{n} \mbox{ for all } n\leq m\right\}.
\end{equation}
Combining  (\ref{equ:topology-V-L-1}) and (\ref{equ:topology-V-L-2}) we obtain that every balanced and absorbent neighborhood $W$ of zero in $\VV(X)$ contains a subset of the form
$
\sum_{n\in\NN} V_{n} + \sum_{n\in\NN} \frac{1}{\phi_n} X,
$
where $\{V_{n}\}_{n\in\NN}\in \UU_X^\NN$ and $\{\phi_n\}_{n\in\NN}\in \IR^X_{>0}$. It turns out that the converse is also true.

\begin{theorem} \label{t:topology-V(X)}
The family
\[
\mathcal{B}=\left\{ \sum_{n\in\NN} V_{n} + \sum_{n\in\NN} \frac{1}{\phi_n} X : \{V_{n}\}_{n\in\NN}\in \UU_X^\NN ,\; \{\phi_n\}_{n\in\NN}\in \IR^X_{>0}\right\}
\]
forms a neighbourhood base at zero of $\VV(X)$, and the family
\[
\mathcal{B}_L=\{ \conv(W): W\in\mathcal{B}\},
\]
where $\conv(W)$ is the convex hull of $W$, is a base at zero of $L(X)$.
\end{theorem}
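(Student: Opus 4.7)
The plan is to exploit the universal property of $\VV(X)$. The paragraph preceding the theorem already establishes one inclusion: every balanced and absorbent $\pmb{\mu}_X$-neighborhood of zero contains some member of $\mathcal{B}$. It remains to prove the converse, namely that every $W\in\mathcal{B}$ is itself a $\pmb{\mu}_X$-neighborhood of zero. For this I would let $\tau'$ be the translation-invariant topology on $\VV_X$ having $\mathcal{B}$ as a fundamental system of neighborhoods of zero, verify that $\tau'$ is a vector topology making $i\colon X\to(\VV_X,\tau')$ continuous, and then invoke the universal property of $\VV(X)$ to conclude $\tau'\subseteq\pmb{\mu}_X$, whence every $W\in\mathcal{B}$ is $\pmb{\mu}_X$-open.

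To see that $\mathcal{B}$ is a neighborhood base at zero for a vector topology, I would check the standard criterion. Each $W=\sum V_n+\sum\tfrac{1}{\phi_n}X$ is balanced (the constraint $|t_n|\leq 1$ in both summands permits rescaling by any scalar of modulus at most $1$) and absorbent (a given $v=\sum_{k=1}^m c_k x_k$ lies in $rW$ as soon as $r>\max_k|c_k|\phi_k(x_k)$). The filter-base property follows by intersecting entourages pairwise and taking pointwise maxima of the functions $\phi_n$. The key step is the divisibility requirement $W'+W'\subseteq W$: I would set $V'_n:=V_{2n-1}\cap V_{2n}$ and $\phi'_n(x):=\max(\phi_{2n-1}(x),\phi_{2n}(x))$ and interleave. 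Two elements of $\sum V'_n$, after padding with diagonal pairs to share a common finite length $m$, combine into a single sum of length $2m$ whose $(2k{-}1)$st pair lies in $V'_k\subseteq V_{2k-1}$ and whose $(2k)$th pair lies in $V'_k\subseteq V_{2k}$, placing the total in $\sum V_n$. An identical interleaving handles the $\sum\tfrac{1}{\phi_n}X$ part.

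Continuity of $i\colon X\to(\VV_X,\tau')$ is then immediate: for $x_0\in X$ and $W=\sum V_n+\sum\tfrac{1}{\phi_n}X\in\mathcal{B}$, replacing $V_1$ by its symmetric part, the $X$-neighborhood $U:=V_1[x_0]$ of $x_0$ satisfies $y-x_0\in V_1\subseteq\sum V_n\subseteq W$ for every $y\in U$, so $i(U)\subseteq x_0+W$. Applying the universal property of $\VV(X)$ to this continuous map yields a continuous linear extension $\bar{i}\colon\VV(X)\to(\VV_X,\tau')$; since $X$ is a Hamel basis of $\VV_X$ on which $\bar{i}$ agrees with the identity, $\bar{i}$ is the identity of $\VV_X$, so the identity map $(\VV_X,\pmb{\mu}_X)\to(\VV_X,\tau')$ is continuous and $\tau'\subseteq\pmb{\mu}_X$. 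This finishes the statement for $\VV(X)$.

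For $L(X)$, I would use that $\pmb{\nu}_X$ is the locally convex modification of $\pmb{\mu}_X$: both are the finest locally convex (respectively, vector) topology on $\VV_X$ for which $i$ is continuous, so $\pmb{\nu}_X\subseteq\pmb{\mu}_X$ and any locally convex topology coarser than $\pmb{\mu}_X$ is contained in $\pmb{\nu}_X$. By a standard fact, if $\mathcal{B}$ is a neighborhood base at zero of balanced sets for $\pmb{\mu}_X$, then the collection of their absolutely convex hulls is a neighborhood base at zero for this locally convex modification; since each $W\in\mathcal{B}$ is already balanced, the absolutely convex hull of $W$ coincides with $\conv(W)$, and $\mathcal{B}_L$ is therefore a base at zero of $L(X)$. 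The principal obstacle in the whole argument is the careful index bookkeeping in the interleaving step establishing $W'+W'\subseteq W$; the remaining verifications are either mechanical or immediate consequences of the universal property.
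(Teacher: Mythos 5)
Your proposal is correct and follows essentially the same route as the paper: the same interleaving construction ($V'_n\subseteq V_{2n-1}\cap V_{2n}$, $\phi'_n\geq\max\{\phi_{2n-1},\phi_{2n}\}$) to verify that $\mathcal{B}$ generates a vector topology $\TTT$, continuity of $X\to(\VV_X,\TTT)$ plus the universal property to get $\TTT\leq\pmb{\mu}_X$, and passage to convex hulls of balanced sets for the $L(X)$ statement. The only cosmetic difference is that you delegate the reverse inclusion $\pmb{\mu}_X\leq\TTT$ to the paragraph preceding the theorem, whereas the paper re-derives it inside the proof with an explicit chain $[-1,1]U_k+[-1,1]U_k+[-1,1]U_k\subseteq U_{k-1}$ so that the two partial sums land jointly in $U$; this is the same construction, just carried out in full.
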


\begin{proof}
We prove the theorem in two steps.

\smallskip
{\em Step 1. We claim that the family $\mathcal{B}$ is a base of some vector topology $\TTT$ on $\VV_X$.} Indeed, by construction, each set $W\in\mathcal{B}$ is balanced and absorbent. It is clear that $\mathcal{B}$ is a filterbase at zero. So, by Theorem 4.5.1 of \cite{NaB}, we have to check only that, for every $W=\sum_{n\in\NN} V_{n} + \sum_{n\in\NN} \frac{1}{\phi_n} X\in\mathcal{B}$, there is a $W'=\sum_{n\in\NN} V'_{n} + \sum_{n\in\NN} \frac{1}{\phi'_n} X\in\mathcal{B}$ such that $W' +W' \subseteq W$. For every $n\in\NN$, choose $V'_n\in \UU_X$ such that $V'_{n} \subseteq V_{2n-1} \cap V_{2n}$ and $\phi'_n\in\IR^X_{>0}$ such that $\phi'_n \geq\max\{\phi_{2n-1}, \phi_{2n}\}$. Then, for every $m\in\NN$, we obtain the following: if  $|t_n|,|s_n|\leq 1 $ and $(x_n,y_n), (u_n,v_n)\in  V'_{n}$, then
\[
\begin{split}
\sum_{n=1}^m t_n (x_n-y_n) & + \sum_{n=1}^m s_n (u_n-v_n) \\
& = t_1 (x_1-y_1) + s_1 (u_1-v_1)+\cdots + t_m (x_m-y_m) +s_m (u_m-v_m)\\
& \in \left\{ \sum_{n=1}^{2m} t_n (x_n-y_n): |t_n|\leq 1 \mbox{ and } (x_n,y_n)\in  V_{n} \mbox{ for all } n\leq 2m\right\},
\end{split}
\]
and if $|t_n|, |s_n| \leq \frac{1}{\phi'_n(x)}$ and $x_n,y_n\in X$, then
\[
\begin{split}
\sum_{n=1}^m t_n x_{n}  + \sum_{n=1}^m s_n y_{n}  & = t_1 x_{1} + s_1 y_{1} +\cdots + t_m x_{m} +s_m y_{m}\\
& \in \left\{ \sum_{n=1}^{2m} t_n x_{n}: x_{n} \in X \mbox{ and } |t_n| \leq \frac{1}{\phi_n(x_n)}  \mbox{ for all } n\leq 2m\right\}.
\end{split}
\]
These inclusions easily imply $W' + W' \subseteq W$.

\smallskip
{\em Step 2. We claim that $\TTT=\pmb{\mu}_X$.} Indeed, if $x\in X$ and $W=\sum_{n\in\NN} V_{n} + \sum_{n\in\NN} \frac{1}{\phi_n} X\in \mathcal{B}$, then $x+W$ contains the neighborhood $V_{1}[x]:=\{ y\in X: (x,y)\in V_{1}\}$ of $x$ in $X$. Hence the identity map $\delta: X\to (\VV_X,\TTT), \delta(x)=x,$ is continuous. Therefore $\TTT\leq \pmb{\mu}_X$ by the definition of $\pmb{\mu}_X$. Below we show that $\TTT\geq \pmb{\mu}_X$.

Given any circled and absorbent neighborhood $U$ of zero in $\pmb{\mu}_X$, choose symmetric neighborhoods $U_0,U_1,\dots$ of zero in $\pmb{\mu}_X$ such that $[-1,1]U_0 +[-1,1]U_0\subseteq U$ and
\[
 [-1,1]U_k + [-1,1]U_k + [-1,1]U_k \subseteq U_{k-1}, \quad \mbox{ for every } k\in\NN.
\]
Since $\UU_X$ is the universal uniformity and $X$ is a subspace of $\VV(X)$ by Theorem 2.3 of \cite{GM}, for every $n\in\NN$, we can choose $V_n\in\UU_X$ such that $y-x\in U_n$ for every $(x,y)\in V_{n}$. For every $n\in\NN$ and each $x\in X$, choose $\lambda(n,x)>0$ such that
\[
[-\lambda(n,x),\lambda(n,x)]x \subseteq U_n,
\]
and set $\phi_n (x):= [1/\lambda(n,x)]+1$. Clearly, $\phi_n \in\IR_{>0}^X$ for every $n\in\NN$. Then, for every $m\in \NN$, we obtain the following: if $|t_n|\leq 1 \mbox{ and } (x_n,y_n)\in  V_{n} \mbox{ for all } n\leq m$, then
\[
\sum_{n=1}^m t_n (x_n-y_n) \in  [-1,1]U_1 + \cdots  + [-1,1]U_m \subseteq U_0,
\]
and if $ |t_n| \leq \frac{1}{\phi_n(x_n)}$ for $n=1,\dots,m$, then
\[
\sum_{n=1}^m t_n x_{n} \in [-1,1]U_1 +\cdots + [-1,1]U_m \subseteq U_0.
\]
Therefore $\sum_{n\in\NN} V_{n} + \sum_{n\in\NN} \frac{1}{\phi_n} X\subseteq U$. Thus $\TTT \geq \pmb{\mu}_X$ and hence $\TTT=\pmb{\mu}_X$.

Finally, the definition of the topology $\pmb{\nu}_X$ of $L(X)$ and Proposition 5.1 of \cite{GM} imply that the family $\mathcal{B}_L$ is a base at zero of $\pmb{\nu}_X$.
\end{proof}

From the definition of $L(X)$ it easily follows the well known fact that the dual space $L(X)'$ of $L(X)$ is linearly isomorphic to the space $C(X)$. Therefore there are two notions of equicontinuity of a subset $S$ of $C(X)$: $S$ is equicontinuous as a set of functions on $X$ and $S$ is equicontinuous as a subset of the dual space of $L(X)$. 

\begin{proposition} \label{p:equicontinuity-C(X)}
Let $X$ be a Tychonoff space. If a subset $S$ of $C(X)$ is equicontinuous as a subset of the dual space of $L(X)$, then $S$ is equicontinuous as a subset of $C(X)$.
\end{proposition}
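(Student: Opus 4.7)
The plan is to unpack the equicontinuity hypothesis using the description of the base at zero of $L(X)$ given in Theorem \ref{t:topology-V(X)}, and then exploit the fact that the first term $V_1$ of such a base element is an entourage of the universal uniformity $\UU_X$, hence induces neighborhoods in $X$ itself.

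Fix $x_0\in X$ and $\e>0$. Equicontinuity of $S$ as a subset of $L(X)'$ means that the set
\[
U:=\{v\in L(X): |\chi(v)|\leq \e \mbox{ for every } \chi\in S\}
\]
is a neighborhood of zero in $L(X)$. By Theorem \ref{t:topology-V(X)}, there exist $\{V_n\}_{n\in\NN}\in\UU_X^\NN$ and $\{\phi_n\}_{n\in\NN}\in\IR_{>0}^X$ such that
\[
\conv\Bigl(\sum_{n\in\NN}V_n+\sum_{n\in\NN}\tfrac{1}{\phi_n}X\Bigr)\subseteq U.
\]

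The key observation is that the raw (non-convex-hulled) set already contains all differences $x-y$ with $(x,y)\in V_1$: take $m=1$, $t_1=1$ in the definition of $\sum_n V_n$, and take $t_1=0$ in the definition of $\sum_n \tfrac{1}{\phi_n}X$ to see that $x-y\in \sum_n V_n+\sum_n \tfrac{1}{\phi_n}X$ whenever $(x,y)\in V_1$. Consequently, for every $f\in S$ and every $(x,y)\in V_1$,
\[
|f(x)-f(y)|=|f(x-y)|\leq \e.
\]

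Since $\UU_X$ is the universal uniformity of $X$ and induces the topology of $X$, the set $V_1[x_0]:=\{y\in X:(x_0,y)\in V_1\}$ is a neighborhood of $x_0$ in $X$, and the previous estimate gives $|f(x_0)-f(y)|\leq \e$ for every $y\in V_1[x_0]$ and every $f\in S$. This is exactly equicontinuity of $S\subseteq C(X)$ at $x_0$, and since $x_0$ and $\e$ were arbitrary, the proof is complete.

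There is essentially no obstacle beyond carefully parsing the two notions of equicontinuity and reading off the first entourage $V_1$ from the base description; the argument is a one-line application of Theorem \ref{t:topology-V(X)} once that bookkeeping is done.
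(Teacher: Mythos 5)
Your proof is correct and follows essentially the same route as the paper's: both extract from Theorem \ref{t:topology-V(X)} an entourage of the universal uniformity $\UU_X$ such that $x-y$ lands in the prescribed neighborhood of zero whenever $(x,y)$ belongs to it, and then use the corresponding uniform neighborhood $V_1[x_0]$ in $X$ to conclude equicontinuity at $x_0$. The only cosmetic difference is that the paper rescales by an integer $n>1/\e$ to get a strict inequality, whereas you build $\e$ directly into the polar set.
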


\begin{proof}
Fix $z\in X$ and $\e>0$. Take a neighborhood $U$ of zero in $L(X)$ such that $S\subseteq U^\circ$. Choose an $n\in\NN$ such that $n> 1/\e$ and a neighborhood $W$ of zero in $L(X)$ such that $nW \subseteq U$. Theorem \ref{t:topology-V(X)} implies that there exists an entourage $V\in\UU_X$ such that $x-y\in W$ for every $(x,y)\in V$. Observe that the set $\mathcal{O}:=\{ x\in X: (x,z)\in V\}$ is a neighborhood of $z$. Then, for every $x\in \mathcal{O}$ and each $f\in S$, we obtain
\[
|f(x)-f(z)|=\frac{1}{n} |n(f(x)-f(z))|= \frac{1}{n} \big| f\big( n(x-z)\big)\big|\leq \frac{1}{n} \cdot 1 <\e.
\]
Thus $S$ is equicontinuous as a subset of $C(X)$. \qed
\end{proof}

Denote by $M_c(X)$ the space of all real regular Borel measures on $X$ with compact support. It is well known that the dual space of $\CC(X)$ is $M_c(X)$, see \cite[Proposition~7.6.4]{Jar}. For every $x\in X$, we denote by $\delta_x \in M_c(X)$ the evaluation map (Dirac measure), i.e. $\delta_x(f):=f(x)$ for every $f\in C(X)$. The total variation norm of a measure $\mu \in M_c(X)$ is denoted by $\| \mu\|$. Denote by $\tau_e$ the polar topology on $M_c(X)$ defined by the family of all equicontinuous  pointwise bounded subsets of $C(X)$.
We shall use the following deep result of Uspenski\u{\i} \cite{Usp2}.

\begin{theorem}[\cite{Usp2}] \label{t:Free-complete-L}
Let $X$ be a Tychonoff space and let $\mu X$ be the Dieudonn\'{e} completion of $X$. Then the completion $\overline{L(X)}$ of $L(X)$ is topologically isomorphic to $\big(M_c(\mu X),\tau_e\big)$.
\end{theorem}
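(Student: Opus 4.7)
The plan is to combine Raikov's description of the topology of $L(X)$ (Theorem \ref{t:topology-L(X)-Raikov}) with Grothendieck's completeness theorem and the universal property of the Dieudonn\'e completion. I would first recall that $L(X)'\cong C(X)$ as vector spaces and, by Raikov, that the equicontinuous subsets of $L(X)'$ are precisely the pointwise bounded equicontinuous subsets $S\subseteq C(X)$. Grothendieck's completeness theorem then realises $\overline{L(X)}$ as the space of linear functionals on $C(X)$ whose restriction to each such $S$ is continuous for pointwise convergence, equipped with the polar topology $\tau_e$ of uniform convergence on those $S$.

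Next, using the universal property of the Dieudonn\'e completion, I would identify $C(X)$ with $C(\mu X)$ via unique continuous extension. Under this identification, pointwise bounded equicontinuous subsets of $C(X)$ correspond exactly to pointwise bounded equicontinuous subsets of $C(\mu X)$. The advantage of working over $\mu X$ is that $\mu X$ is itself a $\mu$-space, so for every compact $K\subseteq\mu X$ Ascoli's theorem yields that the restriction of any such $S$ to $K$ is relatively compact in $C(K)$; in particular, on $K$ pointwise convergence on $S$ coincides with uniform convergence.

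I would then construct the candidate isomorphism $\Phi:(M_c(\mu X),\tau_e)\to\overline{L(X)}$ sending $\mu\mapsto \bigl(f\mapsto\int f\,d\mu\bigr)$, where $f\in C(X)$ is identified with its extension to $\mu X$. The compactness of $\supp(\mu)$ together with the Ascoli step shows that $\Phi(\mu)$ is pointwise-continuous on every equicontinuous pointwise bounded $S$, so $\Phi(\mu)\in\overline{L(X)}$. Both sides carry the polar topology determined by the same family of equicontinuous subsets, so once bijectivity is established the topological isomorphism is automatic; injectivity follows from Riesz uniqueness on $\mu X$, since $C(\mu X)$ separates the Radon measures in $M_c(\mu X)$.

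The main obstacle is surjectivity of $\Phi$: given $\varphi\in\overline{L(X)}$, one must produce a compact $K\subseteq\mu X$ and a Radon measure on $K$ representing $\varphi$. The standard route is to show that $\varphi$, viewed as a functional on $C(\mu X)$, is continuous on $\CC(\mu X)$, whence $\varphi\in M_c(\mu X)$ by \cite[Proposition~7.6.4]{Jar}. This requires converting continuity on equicontinuous pointwise bounded sets (in the pointwise topology) into genuine compact-open continuity, using that these two topologies coincide on equicontinuous families (Ascoli) and that the $\mu$-space property of $\mu X$ promotes the functional boundedness of $\varphi$ on equicontinuous families to a bona fide compact support set $K\subseteq\mu X$. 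This localisation of support on the Dieudonn\'e completion is the technical heart of Uspenski\u{\i}'s theorem.
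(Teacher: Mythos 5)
The paper does not prove this statement at all: it is imported verbatim from Uspenski\u{\i} \cite{Usp2} and used as a black box (the text explicitly calls it a ``deep result''), so there is no internal argument to compare yours against. Judged on its own, your outline follows the natural strategy --- Ra\u{\i}kov's description of the equicontinuous subsets of $L(X)'=C(X)$, Grothendieck's completeness theorem, transfer to $\mu X$, and the integration map $\Phi$ --- and the easy half is handled correctly: well-definedness of $\Phi$ via Ascoli, injectivity via Riesz uniqueness (using that compact subsets of $\mu X$ are $C^*$-embedded), and the observation that the topological isomorphism is automatic once bijectivity is known, both topologies being polar topologies of the same family. One intermediate step is asserted rather than proved: that pointwise bounded equicontinuous families on $X$ correspond exactly to pointwise bounded equicontinuous families on $\mu X$. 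This is true but needs an argument --- e.g.\ that such a family $S$ is uniformly equicontinuous for the universal uniformity $\UU_X$ because $x\mapsto (f(x))_{f\in S}$ is a continuous, hence $\UU_X$-uniformly continuous, map into $\ell_\infty(S)$, and therefore extends to the completion.

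The genuine gap is surjectivity of $\Phi$. You name it, you correctly identify it as ``the technical heart of Uspenski\u{\i}'s theorem'', and then you do not supply it. Given $\varphi\in\overline{L(X)}$, i.e.\ a linear functional on $C(\mu X)$ whose restriction to every pointwise bounded equicontinuous set is continuous for pointwise convergence, you must produce a compact $K\subseteq\mu X$ supporting $\varphi$ and verify that $\varphi$ is a Radon measure there. Neither reduction you gesture at is routine: continuity on equicontinuous sets does not formally yield $\tau_k$-continuity on all of $\CC(\mu X)$ (the compact-open topology is not in general determined by its restrictions to equicontinuous families, and the identification $\overline{L(X)}=\CC(\mu X)'$ is precisely what is being proved), and the localisation of the support is where Uspenski\u{\i}'s argument does its real work. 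As written, the proposal is an accurate road map with the main climb left as a signpost; it is not yet a proof.
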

In what follows we shall also identify elements $x\in X$ with the corresponding Dirac measure $\delta_x \in M_c(X)$.
We need the following corollary of Theorem \ref{t:Free-complete-L} noticed in \cite{Gabr-L(X)-Mackey}.
\begin{corollary}[\cite{Gabr-L(X)-Mackey}] \label{p:Ck-Mc-compatible}
Let $X$ be a  Dieudonn\'{e} complete space. Then $(M_c(X),\tau_e)' =\CC(X)$.
\end{corollary}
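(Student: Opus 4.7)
The plan is to chain together Uspenski\u{\i}'s theorem with the universal property of $L(X)$. Since $X$ is Dieudonn\'{e} complete, we have $\mu X = X$, so Theorem~\ref{t:Free-complete-L} gives the topological isomorphism $\overline{L(X)} \cong (M_c(X),\tau_e)$. Taking continuous duals and using the standard fact that a locally convex space and its completion have identical continuous duals --- any continuous linear functional on a dense subspace extends uniquely (by completeness of $\mathbb{R}$) to the completion, and the restriction map is its inverse --- yields $(M_c(X),\tau_e)' = L(X)'$ as linear spaces.

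To finish, I would invoke the universal property defining $L(X)$: every $f\in C(X)$ extends uniquely to a continuous linear functional $\bar f : L(X)\to \mathbb{R}$, giving a linear injection $C(X)\hookrightarrow L(X)'$. Conversely, each $\chi\in L(X)'$ restricts along the topological embedding $i: X\hookrightarrow L(X)$ to an element of $C(X)$, and these two constructions are mutually inverse. Hence $L(X)' = C(X)$ linearly, and combining with the previous step gives the desired $(M_c(X),\tau_e)' = \CC(X)$ on the level of linear spaces, with the identification being the usual integration pairing $\langle f,\mu\rangle = \int f\,d\mu$ (consistent, on Dirac measures, with the identification $x\leftrightarrow \delta_x$ used in the excerpt).

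I do not foresee a genuine obstacle: the substantive analytic content is packaged inside Uspenski\u{\i}'s theorem, and the remaining work is unfolding the universal property plus the completion-of-dual argument. The only point requiring a small check is consistency with the identification mentioned just before the corollary, namely that the dual of $\CC(X)$ is $M_c(X)$ by \cite[Proposition~7.6.4]{Jar}; this confirms that $\langle (M_c(X),\tau_e),\CC(X)\rangle$ forms a dual pair in the Mackey--Arens sense under the integration pairing, which is what gives the corollary its intended content and makes it usable in the sequel.
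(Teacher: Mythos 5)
Your proposal is correct and follows exactly the route the paper intends: the paper states this as an immediate corollary of Theorem \ref{t:Free-complete-L} (with $\mu X=X$ by Dieudonn\'e completeness), combined with the standard facts that a locally convex space and its completion have the same continuous dual and that $L(X)'=C(X)$. The consistency check via the Dirac-measure identification and the pairing with $\CC(X)'=M_c(X)$ is the right way to see that the equality is the compatibility-of-duality statement used later in the paper.
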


We need  also the following fact, see \S 5.10 in \cite{NaB}. 
\begin{proposition} \label{p:Ascoli-Free-Ck}
Let $X$ be a Tychonoff space and let $\KK$ be an equicontinuous pointwise bounded subset of $C(X)$. Then the pointwise closure ${\bar A}$ of $A$ is $\tau_k$-compact and equicontinuous.  
\end{proposition}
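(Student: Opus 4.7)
The plan is to follow the classical Ascoli--Arzel\`a route. First I would verify that the pointwise closure $\bar A$ of $A$ (I take $A=\KK$, matching the statement's conclusion) is still equicontinuous and still pointwise bounded. For equicontinuity at $x_0\in X$ and $\varepsilon>0$, equicontinuity of $A$ gives an open neighborhood $U$ of $x_0$ with $|f(x)-f(x_0)|\le\varepsilon$ for all $f\in A$ and $x\in U$. Since evaluation at each point is continuous on $\RR^X$, each such inequality is a closed condition in the product topology, so it passes to the pointwise closure: $|g(x)-g(x_0)|\le\varepsilon$ for every $g\in\bar A$ and $x\in U$. The pointwise bound $|g(x)|\le M_x:=\sup_{f\in A}|f(x)|$ is closed in $\RR^X$ by the same reasoning, so $\bar A$ remains pointwise bounded.

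Next I would get pointwise compactness from Tychonoff's theorem: $\bar A$ sits inside $\prod_{x\in X}[-M_x,M_x]$, which is compact; and $\bar A$ is by definition closed in $\RR^X$, hence in this product. Therefore $\bar A$ is $\tau_p$-compact.

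Finally I would upgrade $\tau_p$-compactness to $\tau_k$-compactness by showing that the two topologies agree on $\bar A$. Since $\tau_p\le\tau_k$ always, it suffices to check that every basic $\tau_k$-neighborhood of $g\in\bar A$ contains a relative $\tau_p$-neighborhood. Given compact $K\subseteq X$ and $\varepsilon>0$, equicontinuity of $\bar A$ furnishes, at each $y\in K$, an open neighborhood $U_y\ni y$ with $|h(x)-h(y)|<\varepsilon/3$ for all $h\in\bar A$ and $x\in U_y$. Compactness of $K$ selects finitely many points $x_1,\dots,x_n\in K$ whose $U_{x_i}$ cover $K$. A standard $\varepsilon/3$-argument then shows that whenever $h\in\bar A$ satisfies $|h(x_i)-g(x_i)|<\varepsilon/3$ for $i=1,\dots,n$, one has $\sup_{x\in K}|h(x)-g(x)|<\varepsilon$. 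Hence the identity map $(\bar A,\tau_p)\to(\bar A,\tau_k)$ is continuous; being a continuous bijection from a compact space onto a Hausdorff one, it is a homeomorphism, and $\bar A$ is $\tau_k$-compact.

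The only substantive step is the pointwise-to-uniform upgrade in the last paragraph; everything else reduces to the observation that both equicontinuity and the pointwise bound are closed conditions in $\RR^X$, together with Tychonoff's theorem. There is no real obstacle, since this is essentially the textbook Ascoli--Arzel\`a theorem in its general form for Tychonoff spaces, and the proposition is explicitly credited to \S5.10 of \cite{NaB}.
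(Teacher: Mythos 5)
Your proof is correct and is precisely the classical Ascoli--Arzel\`a argument: the paper itself offers no proof of this proposition, simply citing \S 5.10 of Narici--Beckenstein, which proceeds along the same lines (closure preserves equicontinuity and pointwise bounds, Tychonoff gives $\tau_p$-compactness, and equicontinuity makes $\tau_p$ and $\tau_k$ agree on the closure). You also correctly resolve the statement's typo by reading $A=\KK$.
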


For  $\chi = a_1 x_1+\cdots +a_n x_n\in L(X)$ with distinct $x_1,\dots, x_n\in X$ and  nonzero $a_1,\dots,a_n\in\IR$, we set
\[
\| \chi\|:=|a_1|+\cdots +|a_n|, \; \mbox{ and } \; \mathrm{supp}(\chi):=\{ x_1,\dots, x_n\},
\]
and recall that
\[
f(\chi)=a_1 f(x_1)+\cdots +a_n f(x_n), \; \mbox{ for every } f\in C(X)=L(X)'.
\]
For $\{ 0\}\not= A\subseteq L(X)$, set $\mathrm{supp}(A):=\bigcup_{\chi\in A} \mathrm{supp}(\chi)$ and $C_A:=\sup\{ \| \chi\|: \chi\in A\}$.
Below we describe bounded subsets of $L(X)$. 

\begin{proposition} \label{p:bounded-in-L(X)}
For a nonzero subset $A$ of $L(X)$ the following assertions are equivalent:
\begin{enumerate}
\item[{\rm (i)}] $A$ is bounded;
\item[{\rm (ii)}] $\supp(A)$ has compact closure in the Dieudonn\'{e} completion $\mu X$ of $X$ and $C_A$ is finite;
\item[{\rm (iii)}] $\supp(A)$ is functionally bounded in $X$ and $C_A$ is finite.
\end{enumerate}
\end{proposition}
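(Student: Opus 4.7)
The plan splits into three parts: the routine equivalence (ii)$\Leftrightarrow$(iii), the easy implication (iii)$\Rightarrow$(i), and the deep implication (i)$\Rightarrow$(ii). For (ii)$\Leftrightarrow$(iii), since $\mu X$ is Dieudonn\'e complete it is a $\mu$-space, and every $f\in C(X)$ extends uniquely to $\tilde f\in C(\mu X)$. Hence functional boundedness of $\supp(A)\subseteq X$ in $X$ coincides with functional boundedness in $\mu X$, which in turn coincides with relative compactness in $\mu X$.

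For (iii)$\Rightarrow$(i) I would use Raikov's Theorem~\ref{t:topology-L(X)-Raikov} to reduce the claim to showing that $A$ is absorbed by every set of the form $S^\circ$, where $S\subseteq C(X)$ is equicontinuous and pointwise bounded. The trick is that the envelope $g(x):=\sup_{f\in S}|f(x)|$ belongs to $C(X)$ (equicontinuity gives continuity, pointwise boundedness gives finiteness), so functional boundedness of $\supp(A)$ yields $M:=\sup_{\supp(A)} g<\infty$, and then for every $\chi=\sum_i a_i x_i\in A$ and $f\in S$ one has
\[
|f(\chi)|\leq \sum_i|a_i|\,g(x_i)\leq C_A\cdot M,
\]
which shows $A\subseteq (C_A M)S^\circ$.

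The main direction (i)$\Rightarrow$(ii) will be proved by transferring $A$ into the completion $\overline{L(X)}=(M_c(\mu X),\tau_e)$ provided by Theorem~\ref{t:Free-complete-L}, where it remains bounded. Corollary~\ref{p:Ck-Mc-compatible} tells us $\tau_e$ is compatible with the pairing $\langle M_c(\mu X),\CC(\mu X)\rangle$, so $A$ is $\sigma$-bounded as well: $\sup_{\chi\in A}|\chi(f)|<\infty$ for every $f\in C(\mu X)$. Because $\mu X$ is a $\mu$-space, the Nachbin--Shirota theorem makes $\CC(\mu X)$ barrelled; viewing $A$ as a pointwise bounded family of continuous linear functionals on $\CC(\mu X)$ via the identification $\CC(\mu X)'=M_c(\mu X)$, the uniform boundedness principle upgrades this to equicontinuity, giving $A\subseteq [K;\e]^\circ$ for some compact $K\subseteq\mu X$ and some $\e>0$.

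The hard part will be decoding this inclusion into the concrete statement ``$\supp(A)\subseteq K$ and $C_A\leq 1/\e$''. For the support containment, if some $x_{i_0}\in\supp(\chi)$ lay outside $K$, complete regularity of $\mu X$ would produce $h\in C(\mu X)$ vanishing on the closed set $K\cup(\supp(\chi)\setminus\{x_{i_0}\})$ and taking an arbitrarily prescribed large value $t$ at $x_{i_0}$; then $h\in[K;\e]$, while $|\chi(h)|=t|a_{i_0}|$ can be forced above $1$, contradicting $\chi\in[K;\e]^\circ$. Once $\supp(\chi)\subseteq K$, I would use that the finitely many distinct support points of $\chi$, lying in the Hausdorff space $\mu X$, admit pairwise disjoint open neighbourhoods, and complete regularity provides functions $g_i\colon\mu X\to[0,1]$ supported in these neighbourhoods with $g_i(x_i)=1$; for any $\delta\in(0,\e)$ the function $f:=(\e-\delta)\sum_i\operatorname{sign}(a_i)g_i$ lies in $[K;\e]$ and satisfies $\chi(f)=(\e-\delta)\|\chi\|$, whence $\|\chi\|\leq 1/\e$ after letting $\delta\downarrow 0$. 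This extracts (ii) and completes the proof.
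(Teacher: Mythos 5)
Your proposal is correct, and it splits the work differently from the paper. The implication (i)$\Rightarrow$(ii) is essentially the paper's argument: both of you pass to the completion $(M_c(\mu X),\tau_e)$ via Theorem~\ref{t:Free-complete-L}, invoke Corollary~\ref{p:Ck-Mc-compatible} and the Nachbin--Shirota theorem to upgrade boundedness to equicontinuity, and land on an inclusion $A\subseteq [K;\e]^\circ$; the paper then dismisses the decoding of this inclusion as ``easy to see,'' whereas you supply the two bump-function constructions (separating a stray support point from $K$, and the disjointly supported signed bumps giving $\|\chi\|\le 1/\e$), which is a genuine service since that step is where complete regularity and density of $X$ in $\mu X$ are actually used. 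Where you genuinely diverge is the converse: the paper proves (ii)$\Rightarrow$(i) by showing that $B=\bigl[\overline{\supp(A)};1/C_A\bigr]^\circ$ is $\sigma(M_c(\mu X),C(\mu X))$-compact by Alaoglu, then comparing $\tau_e$ with the precompact-open topology (via Proposition~\ref{p:Ascoli-Free-Ck} and a result from Horv\'ath) to conclude that $B$ is $\tau_e$-compact, hence bounded. You instead prove (iii)$\Rightarrow$(i) directly from Ra\u{\i}kov's Theorem~\ref{t:topology-L(X)-Raikov}: the envelope $g=\sup_{f\in S}|f|$ of an equicontinuous pointwise bounded family is continuous, hence bounded by some $M$ on the functionally bounded set $\supp(A)$, and the estimate $|f(\chi)|\le C_A M$ shows $A$ is absorbed by each basic neighbourhood $S^\circ$. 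Your route is more elementary (no completion, no Alaoglu, no topology comparison on $M_c(\mu X)$) and stays entirely inside $L(X)$; the paper's route yields the slightly stronger byproduct that $A$ sits inside a $\tau_e$-compact set of the completion. Both are sound, and your three implications (ii)$\Leftrightarrow$(iii), (iii)$\Rightarrow$(i), (i)$\Rightarrow$(ii) close the cycle.
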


\begin{proof}
Observe that a subset $B$ of an lcs $E$ is bounded if and only if its closure $\overline{B}$ in the completion $\overline{E}$ of $E$ is bounded.

(i)$\Rightarrow$(ii) Let $A$ be bounded. By Theorem \ref{t:Free-complete-L}, we have $\overline{L(X)}=(M_c(\mu X),\tau_e)$ and, by Corollary \ref{p:Ck-Mc-compatible}, the topology $\tau_e$ is compatible with the duality $(\CC(\mu X),M_c(\mu X))$. As $\mu X$ is a $\mu$-space, the Nachbin--Shirota theorem implies that $\CC(\mu X)$ is barrelled. Therefore $A$ is a bounded subset of $L(X)$ if and only if its closure $\overline{A}$ in $(M_c(\mu X),\tau_e)$ is equicontinuous and hence if and only if there is a compact subset $K$ of $\mu X$ and $\e>0$ such that $A\subseteq [K;\e]^\circ \cap L(X)$. By the regularity of $\mu X$ and the density of $X$ in $\mu X$, it is easy to see that
\[
\chi = a_1 x_1+\cdots +a_n x_n \in [K;\e]^\circ \cap L(X),
\]
where $x_1,\dots, x_n\in X$ are distinct and $a_1,\dots,a_n$ are nonzero, if and only if $x_1,\dots,x_n\in K$ and $\| \chi\|=|a_1|+\cdots +|a_n|\leq 1/\e$. Therefore, if $A$ is bounded, then $\supp(A)\subseteq K$ and $C_A \leq 1/\e$.

(ii)$\Rightarrow$(i) Let $\overline{\supp(A)}$ be compact in $\mu X$ and $C_A<\infty$. Set $B:= \big[\overline{\supp(A)}; 1/C_A \big]^\circ$. Then $B$ is absolutely convex, equicontinuous and  $\sigma\big(M_c(\mu X),C(\mu X)\big)$-compact by the Alaoglu theorem. Therefore $B$ is compact in the precompact-open topology $\tau_{pc}$ on $M_c(\mu X)$ by Proposition 3.9.8 of \cite{horvath}. Since the compact-open topology on $M_c(\mu X)$ is clearly weaker than $\tau_{pc}$,  Proposition \ref{p:Ascoli-Free-Ck} implies that  $\tau_e\leq \tau_{pc}$, and hence $B$ is a $\tau_e$-compact subset of $M_c(\mu X)$. As $A\subseteq B\cap L(X)$, the above observation implies that $A$ is a bounded subset of $L(X)$.

The equivalence of (ii) and (iii) follows from the well known fact that a subset of $X$ is functionally bounded if and only its closure in $\mu X$ is compact. \qed
\end{proof}

Below we use the following simple lemma.
\begin{lemma} \label{l:bounded-in-P-space}
Every functionally bounded subset of a Tychonoff $P$-space is finite.
\end{lemma}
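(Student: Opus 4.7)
I would argue by contradiction. Suppose $A$ is an infinite functionally bounded subset of the Tychonoff $P$-space $X$, and pick pairwise distinct points $a_1,a_2,\dots$ in $A$. The plan is to produce a continuous function $f : X \to \IR$ with $f(a_n) = n$ for every $n \in \NN$, which contradicts the functional boundedness of $A$. The $P$-space property will enter in two places: first to see that $C := \{a_n : n \in \NN\}$ is closed and relatively discrete in $X$, and second, more crucially, to guarantee the continuity of a piecewise definition of $f$.

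First, since each singleton $\{a_m\}$ is closed in the Tychonoff space $X$ and the $P$-space hypothesis makes every countable intersection of open sets open, every set of the form $C \setminus \{a_n\} = \bigcup_{m \ne n}\{a_m\}$ has open complement, hence is closed. In particular $V_n := X \setminus (C \setminus \{a_n\})$ is an open neighborhood of $a_n$ meeting $C$ only at $a_n$. I would then recursively construct open sets $W_n$ containing $a_n$ whose closures are pairwise disjoint. Given $W_1,\dots,W_{n-1}$, the set $V_n \setminus \bigcup_{k<n}\overline{W_k}$ is open (a finite union of closed sets is closed) and contains $a_n$ (because $\overline{W_k} \cap C \subseteq \{a_k\}$), so the regularity of $X$ yields an open $W_n$ with $a_n \in W_n$ and $\overline{W_n} \subseteq V_n \setminus \bigcup_{k<n}\overline{W_k}$. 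Complete regularity then gives continuous $f_n : X \to [0,1]$ with $f_n(a_n) = 1$ and $f_n \equiv 0$ off $W_n$.

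Now define $f(x) := n \cdot f_n(x)$ when $x \in W_n$ (only one $n$ works, as the $W_n$ are pairwise disjoint) and $f(x) := 0$ otherwise. The main obstacle is checking continuity at points outside the union of the $W_n$ or on the boundary of some $\overline{W_n}$, and this is exactly where the $P$-space hypothesis is indispensable: the countable union $F := \bigcup_n \overline{W_n}$ is closed, because in a $P$-space countable unions of closed sets are closed. Hence $X \setminus F$ is open and $f \equiv 0$ there, while for any $x \in \overline{W_n}$ the set $U := X \setminus \bigcup_{m \ne n}\overline{W_m}$ is an open neighborhood of $x$ (by the same closedness argument), on which $f$ agrees with $n \cdot f_n$ since every $f_m$ with $m \ne n$ vanishes on $U$. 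Therefore $f \in C(X)$, while $f(a_n) = n$ is unbounded on $C \subseteq A$, contradicting the functional boundedness of $A$. Disjointness of closures alone would not suffice for continuity; it is the closedness of the countable union $\bigcup_m \overline{W_m}$, a direct consequence of the $P$-space property, that makes this piecewise extension continuous.
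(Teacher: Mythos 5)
Your proof is correct, and it follows the same overall strategy as the paper's: extract a one-to-one sequence from the infinite functionally bounded set, surround its points by pairwise disjoint open sets, and sum up scaled bump functions to get a continuous function that is unbounded on the set, with the $P$-space hypothesis doing the work of making the infinite sum continuous. The difference lies in the execution. The paper obtains the disjoint open sets by citing Lemma 11.7.1 of Jarchow, then uses the fact that a Tychonoff $P$-space is zero-dimensional to replace them by clopen sets and to verify that the resulting family is discrete, so that $\sum_n n\cdot\mathbf{1}_{U_n}$ is continuous. You instead build everything by hand: the $P$-space property shows directly that each $C\setminus\{a_n\}$ is closed (countable unions of closed sets are closed), regularity then produces open sets with pairwise disjoint closures, complete regularity supplies Urysohn-type functions, and the $P$-space property is used a second time to see that $\bigcup_n\overline{W_n}$ and each $\bigcup_{m\ne n}\overline{W_m}$ are closed, which is exactly what makes the piecewise definition continuous. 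Your route is self-contained (no external lemma, no appeal to zero-dimensionality) and in fact proves slightly more, namely that every countably infinite subset of a Tychonoff $P$-space supports an unbounded continuous function; the paper's route is shorter on the page because the separation work is delegated to the cited lemma and to the clopen structure of $P$-spaces.
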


\begin{proof}
Suppose for a contradiction that there is an infinite functionally bounded subset $A$ of $X$. Then, by Lemma 11.7.1 of \cite{Jar},  there is a one-to-one sequence $\{ a_n\}_{n\in\NN}$ in $A$ and a sequence $\UU=\{ U_n\}_{n\in\NN}$ of open pairwise disjoint subsets of $X$ such that $a_n\in U_n$ for every $n\in\NN$. Since $X$ being a $P$-space is zero-dimensional, we can assume that all $U_n$ are clopen. Let us show that $\UU$ is a discrete family. Indeed, if $x\in X\setminus \bigcup_{n\in\NN} U_n$, then, for every $n\in \NN$, there is a  clopen neighborhood $V_n$ of $x$ such that $V_n\cap U_n=\emptyset$. Set $V=\bigcap_{n\in\NN} V_n$. Then $V$ is a clopen neighborhood of $x$ which does not intersect $\bigcup_{n\in\NN} U_n$. Therefore the function
$
f(x)= \sum_{n\in\NN} n\cdot \mathbf{1}_{U_n}
$
is well-defined and continuous on $X$. Since, by construction, $f$ is unbounded on the functionally bounded set $A$ we obtain a contradiction. \qed
\end{proof}

Following \cite{BG}, a Tychonoff space $X$  is called an {\em  Ascoli space}  if every  compact subset $\KK$ of $\CC(X)$ is evenly continuous. It is noticed in \cite{Gabr-LCS-Ascoli} that a Tychonoff space $X$ is Ascoli if and only if every compact subset of $\CC(X)$ is equicontinuous. By the classical Ascoli theorem \cite[Theorem~3.4.20]{Eng}, every $k$-space is an Ascoli space.  It is clear that every Ascoli space is sequentially Ascoli, but the converse is not true in general as the following proposition shows.

\begin{proposition} \label{p:P-space-Ascoli}
Let $X$ be a non-discrete $P$-space. Then:
\begin{enumerate}
\item[{\rm (i)}] every sequence in $\CC(X)$ is equicontinuous;
\item[{\rm (ii)}] $X$ is  sequentially Ascoli but not Ascoli.
\end{enumerate}
\end{proposition}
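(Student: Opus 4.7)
My plan is to treat (i) first, as it follows almost immediately from the defining $P$-space property. Fixing a sequence $\{f_n\} \subseteq C(X)$, a point $x \in X$, and $\varepsilon > 0$, I will take the open neighborhoods $U_n := f_n^{-1}\bigl(f_n(x)-\varepsilon,\, f_n(x)+\varepsilon\bigr)$ of $x$; the countable intersection $U := \bigcap_{n\in\NN} U_n$ is then open because $X$ is a $P$-space, and on $U$ each $f_n$ varies by less than $\varepsilon$ from $f_n(x)$, giving equicontinuity of $\{f_n\}$ at $x$. The ``sequentially Ascoli'' half of (ii) is then immediate from (i), since every convergent sequence in $\CC(X)$ is in particular a sequence in $C(X)$.

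For the ``not Ascoli'' half, I first record two structural consequences of the hypotheses: every compact subset of $X$ is finite (by Lemma \ref{l:bounded-in-P-space}, since compact sets are functionally bounded), so the compact-open and pointwise topologies on $C(X)$ coincide, i.e.\ $\CC(X) = C_p(X)$; and $X$ is zero-dimensional, which I get by applying the $P$-space property to a Tychonoff separator $f : X \to [0,1]$ to see that $f^{-1}(\{1\}) = \bigcap_{n\in\NN} f^{-1}\bigl((1 - 1/n,\, 1 + 1/n)\bigr)$ is clopen. Fix a non-isolated point $p \in X$, which exists because $X$ is not discrete. The strategy is to exhibit a compact subset of $C_p(X) = \CC(X)$ that fails to be equicontinuous at $p$.

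Using Zorn's lemma, I will pick a maximal pairwise disjoint family $\mathcal{U}$ of nonempty clopen subsets of $X$ satisfying $p \notin U$ for every $U \in \mathcal{U}$. The essential property I need is that every neighborhood of $p$ meets $\bigcup \mathcal{U}$. If this failed, some clopen neighborhood $V$ of $p$ would be disjoint from $\bigcup \mathcal{U}$; non-isolation of $p$ would then give a point $q \in V \setminus \{p\}$, zero-dimensionality would yield a clopen set separating $p$ from $q$, and intersecting this with $V$ would produce a nonempty clopen subset of $V$ avoiding $p$---a new legal member of $\mathcal{U}$, contradicting maximality. (Note that this forces $\mathcal{U}$ to be uncountable, since in a $P$-space a countable union of clopen sets is clopen.)

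Finally, I set $K := \{\mathbf{1}_U : U \in \mathcal{U}\} \cup \{\mathbf{0}\}$; each $\mathbf{1}_U$ lies in $C(X)$ because $U$ is clopen. Pairwise disjointness of $\mathcal{U}$ forces each $y \in X$ to lie in at most one $U \in \mathcal{U}$, so any net in $\{\mathbf{1}_U\}_{U \in \mathcal{U}}$ taking infinitely many distinct values must converge pointwise to $\mathbf{0}$; consequently the pointwise closure of $K$ in $\RR^X$ is $K$ itself, and since $K$ is pointwise bounded it is compact in $\RR^X$ and hence in $C_p(X)$. On the other hand, if $K$ were equicontinuous at $p$, then taking $\varepsilon = 1/2$ would force a neighborhood of $p$ disjoint from $\bigcup \mathcal{U}$ (since each $\mathbf{1}_U(p) = 0$ and the values are in $\{0,1\}$), contradicting the essential property above. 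The main obstacle I foresee is the Zorn-based verification of that property, in which non-isolation of $p$, zero-dimensionality, and the $P$-space feature (countable closed unions remain closed) all need to be combined.
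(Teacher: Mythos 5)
Your proposal is correct and follows essentially the same route as the paper: part (i) via the countable intersection of neighborhoods, and the ``not Ascoli'' half via a maximal pairwise disjoint family of clopen sets missing a non-isolated point, with the witness compact set $\{\mathbf{0}\}\cup\{\mathbf{1}_U : U\in\mathcal U\}$ and the same $\varepsilon=1/2$ non-equicontinuity argument. The only (cosmetic) difference is that you verify compactness of $K$ by identifying $\CC(X)$ with $C_p(X)$ and checking pointwise closedness in $\RR^X$, whereas the paper argues directly in $\CC(X)$ that every basic neighborhood of $\mathbf{0}$ absorbs all but finitely many $\mathbf{1}_U$; both hinge on the fact that compact subsets of a $P$-space are finite, and your phrase ``any net taking infinitely many distinct values must converge pointwise to $\mathbf{0}$'' should be softened to the (true and sufficient) claim that the pointwise closure of $K$ is $K$ itself.
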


\begin{proof}
(i) Let $S=\{ f_n\}_{n\in\w}$ be a sequence in $C(X)$. Fix a point $z\in X$ and $\e>0$. For every $n\in\w$, choose an open neighborhood $U_n$ of $z$ such that $|f_n(x)-f_n(z)|<\e$ for all $x\in U_n$. Set $U:=\bigcap_{n\in\w} U_n$. Then $U$ is a neighborhood of $x$ because $X$ is a $P$-space, and
\[
|f_n(x)-f_n(z)|<\e \; \mbox{ for all } x\in U \mbox{ and } n\in\w.
\]
Thus $S$ is equicontinuous at $z$.

(ii) It follows from (i) that $X$ is a sequentially Ascoli space. To show that $X$ is not Ascoli, we have to find a compact subset of $\CC(X)$ which is not equicontinuous. Fix a non-isolated point $z\in X$. Using the Zorn lemma and zero-dimensionality of $X$, choose an arbitrary maximal (under inclusion) family $\UU=\{U_i: i\in I\}$ of clopen subsets of $X\setminus\{z\}$ such that $U_i\cap U_j=\emptyset$ for all distinct $i,j\in I$. The maximality of $\UU$ implies that $z\in \overline{\bigcup\UU}$. Set $\KK:= \{ 0\} \cup \{ \mathbf{1}_{U_i}: i\in I\}$.

We claim that the family $\KK$ is a compact subset of $\CC(X)$. Indeed, take an arbitrary standard neighborhood $[K;\e]$ of the zero function $0\in \KK$, where $K$ is a compact subset of $X$ and $\e>0$. By Lemma \ref{l:bounded-in-P-space}, $K$ is finite. Since the sets $U_i$ are pairwise disjoint, we obtain that the set $J:=\{ i\in I: K\cap U_i \not=\emptyset\}$ is finite. Therefore, for every $i\in I\setminus J$, the function $\mathbf{1}_{U_i}$ belongs to $[K;\e]$. As $J$ is finite, it follows that $\KK$ is a compact subset of $\CC(X)$.

It remains to show that $\KK$ is not equicontinuous at the point $z$. Set $\e:= 1/2$. Then, for every neighborhood $U$ of $z$, there are $i\in I$ and $x_i\in U_i$ such that $x_i\in U_i \cap U$ (this is possible since $z\in \overline{\bigcup\UU}$). Hence $|\mathbf{1}_{U_i}(x_i)-\mathbf{1}_{U_i}(z)|=1>\e$. Therefore the compact set $\KK$ is not equicontinuous. Thus $X$ is not an Ascoli space. \qed
\end{proof}
A concrete example of a non-discrete Tychonoff $P$-space is the one-point Lindel\"{o}fication of an uncountable discrete space.

To obtain concrete examples of non-$c_0$-quasibarrelled spaces we shall use the next assertion.

\begin{proposition} \label{p:countable-non-sequen-Ascoli}
Let $X$ be a countable non-discrete space whose compact subsets are finite. Then $X$ is not a sequentially Ascoli space.
\end{proposition}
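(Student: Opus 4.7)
The plan is to exploit the fact that all compact subsets of $X$ being finite forces $\tau_k = \tau_p$ on $C(X)$, so convergence in $\CC(X)$ collapses to pointwise convergence, while non-discreteness still furnishes a non-isolated point at which I can sabotage equicontinuity.

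First, I would observe that since every compact subset of $X$ is finite, every basic neighborhood $[K;\eps]$ of $\CC(X)$ has the form $\{f : |f(x)|<\eps \text{ for all } x\in F\}$ for some finite $F\subseteq X$; hence the compact-open and pointwise topologies on $C(X)$ coincide, and convergence of a sequence in $\CC(X)$ just means pointwise convergence on $X$.

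Next, pick a non-isolated point $z\in X$ (which exists because $X$ is non-discrete). As $X$ is countable and Tychonoff (hence $T_1$), I would enumerate $X\setminus\{z\}=\{x_n:n\in\NN\}$. For each $n$, the set $F_n:=\{z,x_1,\dots,x_{n-1}\}$ is finite, hence closed, and does not contain $x_n$. By complete regularity, choose $f_n\in C(X,[0,1])$ with $f_n(x_n)=1$ and $f_n\equiv 0$ on $F_n$. I claim the sequence $\{f_n\}_{n\in\NN}$ converges to $0$ in $\CC(X)$ but is not equicontinuous, which exhibits $X$ as not sequentially Ascoli.

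Pointwise convergence is immediate: $f_n(z)=0$ for every $n$, and for fixed $k$ we have $f_n(x_k)=0$ whenever $n>k$. For the failure of equicontinuity, I would test at $z$ with $\eps=\tfrac12$. Any neighborhood $U$ of $z$ must contain some $x_n$ (otherwise $\{z\}$ would be open, contradicting that $z$ is non-isolated), and then $|f_n(x_n)-f_n(z)|=1>\tfrac12$. Thus no neighborhood of $z$ witnesses equicontinuity, so $\{f_n\}$ is not equicontinuous on $X$, and $X$ is not sequentially Ascoli.

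The only step that requires care is arranging the vanishing of each $f_n$ on all previously used points so as to guarantee pointwise convergence while still keeping $f_n(x_n)=1$; this is handled cleanly by the enumeration and Tychonoff separation, so I do not anticipate a real obstacle.
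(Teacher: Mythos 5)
Your proof is correct, and it takes a somewhat different route from the paper's. The paper first invokes the fact that a countable regular space is zero-dimensional, fixes a non-isolated point $z$, and uses Zorn's lemma to produce a maximal pairwise disjoint family $\{U_n\}_{n\in\NN}$ of clopen subsets of $X\setminus\{z\}$; maximality forces $z\in\overline{\bigcup_n U_n}$, and the indicator functions $\mathbf{1}_{U_n}$ then form the convergent non-equicontinuous sequence (this mirrors the construction used in Proposition \ref{p:P-space-Ascoli}). You instead exploit countability directly: enumerating $X\setminus\{z\}$ and using complete regularity to build bump functions $f_n$ that vanish on the initial segment $\{z,x_1,\dots,x_{n-1}\}$ while taking the value $1$ at $x_n$. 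Your pointwise convergence argument (vanishing on initial segments) and the equicontinuity failure at $z$ (every neighborhood of the non-isolated point $z$ meets $\{x_n\}$) are both sound, and both proofs use the same key observation that finiteness of compact sets gives $\CC(X)=C_p(X)$. What your approach buys is the avoidance of zero-dimensionality and of the maximal-family argument, at the cost of being tied more tightly to countability; the paper's clopen-family construction is the one that generalizes to the uncountable $P$-space setting elsewhere in the text.
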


\begin{proof}
Note that $X$ is zero-dimensional by Corollary 6.2.8 of \cite{Eng}. Let $z$ be a non-isolated point of $X$. Choose a maximal (under inclusion) family  $\UU=\{U_i: i\in I\}$ of clopen subsets of $X\setminus\{z\}$ such that $U_i\cap U_j=\emptyset$ for all distinct $i,j\in I$. The maximality of $\UU$ implies that $z\in \overline{\bigcup\UU}$. Moreover, since $X$ is countable, we have $I=\NN$.  Set $S:= \{ 0\} \cup \{ \mathbf{1}_{U_n}: n\in \NN\}$. It is clear that $\mathbf{1}_{U_n} \to 0$ in $\CC(X)=C_p(X)$ (the last equality holds because all comact subsets of $X$ are finite). To show that $S$ is not equicontinuous at the point $z$, let $\e=1/2$. Then for every neighborhood $U$ of $z$ there are $m\in\NN$ and $x_m\in U_m$ such that $x_m\in U$. Hence $|\mathbf{1}_{U_m}(x_m) - \mathbf{1}_{U_m}(z)|=1 >\e$. Thus the convergent sequence $S$ is not equicontinuous. \qed
\end{proof}

To show that a space is not  sequentially Ascoli we shall use the following proposition (we omit its proof since it is actually a partial case of (the proof of) Proposition 2.1 of \cite{GKP} when the index set $I$ is $\w$).
\begin{proposition} \label{p:sequentially-Ascoli-sufficient}
Assume  that a Tychonoff space $X$ admits a countable family $\U =\{ U_i : i\in \w\}$ of open subsets of $X$, a subset $A=\{ a_i : i\in \w\} \subseteq X$ and a point $z\in X$ such that
\begin{enumerate}
\item[{\rm (i)}] $a_i\in U_i$ for every $i\in \w$;
\item[{\rm (ii)}] $\big|\{ i\in \w : C\cap U_i\not=\emptyset \}\big| <\infty$  for each compact subset $C$ of $X$;
\item[{\rm (iii)}] $z$ is a cluster point of $A$.
\end{enumerate}
Then $X$ is not a sequentially Ascoli space.
\end{proposition}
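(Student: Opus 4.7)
The plan is to exhibit explicitly a sequence $\{f_i\}_{i\in\w}$ in $\CC(X)$ which converges to the zero function but fails to be equicontinuous at $z$. The natural candidate is a family of continuous ``bump'' functions, one for each pair $(a_i,U_i)$, where $f_i$ peaks at $a_i$ (with value $1$) and vanishes off $U_i$. Condition (ii) will then force $\tau_k$-convergence to $0$, while the combination of $f_i(a_i)=1$ and condition (iii) will destroy equicontinuity at $z$.

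First I would reduce to the case $z\notin U_i$ for all $i$. Since $X$ is Tychonoff, $\{z\}$ is compact, so by (ii) the set $J:=\{i\in\w : z\in U_i\}$ is finite; discarding the indices in $J$ (which does not affect (ii) or (iii), because $z$ is still a cluster point of the cofinite remainder of $A$), I may assume $z\notin U_i$ for every $i\in\w$. Now, using complete regularity of $X$, for each $i$ I select $f_i\in C(X)$ with $0\le f_i\le 1$, $f_i(a_i)=1$ and $f_i\equiv 0$ on $X\setminus U_i$; in particular $f_i(z)=0$.

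Next I verify that $f_i\to 0$ in $\CC(X)$. Fix a compact set $C\subseteq X$ and $\eps>0$. Hypothesis (ii) yields that the set $\{i\in\w : C\cap U_i\ne\emptyset\}$ is finite, and for every $i$ outside this finite set the function $f_i$ vanishes on $C$; thus $f_i\in[C;\eps]$ for all sufficiently large $i$, proving convergence to $0$ in the compact-open topology.

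Finally I show that $\{f_i\}_{i\in\w}$ is not equicontinuous at $z$. Set $\eps=\tfrac12$ and let $V$ be any open neighborhood of $z$. By (iii), $z$ is a cluster point of $A$, so there exists $i\in\w$ with $a_i\in V$ (and automatically $a_i\ne z$ since $a_i\in U_i$ but $z\notin U_i$). Then
\[
|f_i(a_i)-f_i(z)| = |1-0| = 1 > \eps,
\]
so no neighborhood of $z$ witnesses equicontinuity for the whole family. Hence $\CC(X)$ contains a convergent sequence which is not equicontinuous, and $X$ is not sequentially Ascoli. The only subtle point is the preliminary reduction removing the indices with $z\in U_i$; once that is done the remainder of the argument is a direct application of the definitions.
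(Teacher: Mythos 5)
Your proof is correct, and it is exactly the standard bump-function argument that the paper relies on (it omits the proof, citing Proposition 2.1 of \cite{GKP}, and uses the same construction in Propositions \ref{p:P-space-Ascoli} and \ref{p:countable-non-sequen-Ascoli}). The one delicate point, discarding the finitely many indices with $z\in U_i$ while keeping $z$ a cluster point of the remaining $a_i$, is valid because $X$ is Hausdorff (hence $T_1$), and you handle it properly.
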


Let $I$ be a partially ordered set. A family $\AAA=\{ A_i\}_{i\in I}$ of subsets of a set $\Omega$ is called {\em $I$-increasing} if $A_i \subseteq A_j$ for every $i\leq j$ in $I$. We say that the family $\AAA$ {\em swallows } a family $\mathcal{B}$ of subsets of $\Omega$ if for every $B\in \mathcal{B}$ there is an $i\in I$ such that $B\subseteq A_i$.
An $\NN$-increasing (respectively, $\NN^\NN$-increasing) family  of functionally bounded subsets of a topological space $X$ is called a {\em fundamental functionally bounded sequence} (respectively,  {\em fundamental functionally bounded  resolution}) in $X$ if it swallows the family of all functionally bounded subsets of $X$.
Analogously, an $\NN$-increasing (respectively, $\NN^\NN$-increasing) family of bounded subsets of an lcs $E$ is called a {\em fundamental bounded sequence} (respectively,  {\em  fundamental  bounded  resolution}) in $E$ if it swallows the family of all bounded subsets of $E$. Below we shall use the following assertion.

\begin{proposition} \label{p:ffbs-ffbr-L(X)}
Let $X$ be a Tychonoff space. Then:
\begin{enumerate}
\item[{\rm (i)}] $L(X)$ has a fundamental bounded sequence if and only if $X$ has a fundamental functionally bounded sequence;
\item[{\rm (ii)}] $L(X)$ has a fundamental bounded resolution if and only if $X$ has a fundamental functionally bounded resolution.
\end{enumerate}
\end{proposition}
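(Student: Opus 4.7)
The plan is to derive both equivalences directly from the bounded-set description given by Proposition \ref{p:bounded-in-L(X)}: a nonzero subset $A\subseteq L(X)$ is bounded iff $\supp(A)$ is functionally bounded in $X$ and $C_A=\sup\{\|\chi\|:\chi\in A\}<\infty$. This turns the question into a bookkeeping exercise combining a support condition (functional boundedness in $X$) with a scalar condition (an upper bound on $\|\chi\|$). Throughout the argument I will also use that the canonical embedding $X\hookrightarrow L(X)$ sends each $x$ to $\delta_x$ with $\|\delta_x\|=1$, so any subset $F\subseteq X$ with $F$ functionally bounded is automatically a bounded subset of $L(X)$ (by Proposition \ref{p:bounded-in-L(X)} applied with $C_F=1$).

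For the ``only if'' direction of both (i) and (ii), given a fundamental bounded sequence $\{B_n\}_{n\in\NN}$ or resolution $\{B_\alpha\}_{\alpha\in\NN^\NN}$ in $L(X)$, I set $A_n:=\supp(B_n)$, respectively $A_\alpha:=\supp(B_\alpha)$. By Proposition \ref{p:bounded-in-L(X)} each $A_n$ (resp.\ $A_\alpha$) is functionally bounded in $X$, and since $\supp$ is monotone with respect to inclusion the family inherits the increasing structure. To check fundamentality, take any functionally bounded $F\subseteq X$; then $F\subseteq L(X)$ is bounded by the remark above, so $F\subseteq B_n$ (resp.\ $F\subseteq B_\alpha$) for some index, and then $F=\supp(F)\subseteq \supp(B_n)=A_n$ (resp.\ $\subseteq A_\alpha$), as required.

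For the ``if'' direction, given a fundamental functionally bounded sequence $\{F_n\}_{n\in\NN}$ in $X$, I define
\[
B_n:=\bigl\{\chi\in L(X):\supp(\chi)\subseteq F_n,\ \|\chi\|\leq n\bigr\},
\]
which is bounded by Proposition \ref{p:bounded-in-L(X)} and clearly increasing in $n$. If $A\subseteq L(X)$ is bounded then Proposition \ref{p:bounded-in-L(X)} gives $\supp(A)$ functionally bounded and $C_A<\infty$; choose $n_0$ with $\supp(A)\subseteq F_{n_0}$ and then any $n\geq\max(n_0,\lceil C_A\rceil)$ yields $A\subseteq B_n$. For part (ii), given a fundamental functionally bounded resolution $\{F_\alpha\}_{\alpha\in\NN^\NN}$, I encode the norm bound into the first coordinate by setting
\[
B_\alpha:=\bigl\{\chi\in L(X):\supp(\chi)\subseteq F_\alpha,\ \|\chi\|\leq \alpha(1)\bigr\}.
\]
Each $B_\alpha$ is bounded, and monotonicity in $\alpha$ is immediate from the monotonicity of $\{F_\alpha\}$ and of $\alpha\mapsto\alpha(1)$. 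For fundamentality, a bounded $A\subseteq L(X)$ has $\supp(A)\subseteq F_\beta$ for some $\beta$ and $C_A<\infty$; setting $\alpha(1):=\max(\beta(1),\lceil C_A\rceil)$ and $\alpha(k):=\beta(k)$ for $k\geq 2$ gives $\alpha\geq\beta$, hence $F_\beta\subseteq F_\alpha$, and therefore $A\subseteq B_\alpha$.

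The only mild subtlety — and the place where I will be most careful — is the resolution construction in (ii): one must encode the scalar bound $C_A$ into an element of $\NN^\NN$ in a way that preserves the componentwise order while still being dominated by some index coming from the given resolution of $X$. Shifting the bound into the single coordinate $\alpha(1)$ (leaving the remaining coordinates available to absorb the support) is the cleanest way to do this; all other parts of the argument are routine once Proposition \ref{p:bounded-in-L(X)} is in hand.
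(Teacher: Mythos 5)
Your proposal is correct and follows essentially the same route as the paper: both directions reduce to the characterization of bounded sets in Proposition \ref{p:bounded-in-L(X)}, with $A_\alpha:=\supp(B_\alpha)$ for necessity and $B_\alpha:=\{\chi:\supp(\chi)\subseteq F_\alpha,\ \|\chi\|\leq\alpha(1)\}$ for sufficiency, exactly as in the paper's proof of (ii). The only cosmetic differences are that you argue fundamentality of $\{A_\alpha\}$ directly via the embedding $X\hookrightarrow L(X)$ instead of by contradiction with the auxiliary set $\{\chi:\supp(\chi)\subseteq A,\ \|\chi\|\leq 1\}$, and you adjust only the first coordinate of the index rather than adding a constant sequence.
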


\begin{proof}
We prove only (ii) because (i) can be proved analogously.
Let $\{ B_\alpha\}_{\alpha\in\NN^\NN}$ be a fundamental bounded resolution in $L(X)$. For every $\alpha\in\NN^\NN$, set $A_\alpha:=\supp(B_\alpha)$. We claim  that $\AAA=\{ A_\alpha\}_{\alpha\in\NN^\NN}$ is  a fundamental functionally bounded resolution in $X$. Indeed, by Proposition \ref{p:bounded-in-L(X)},   all sets $A_\alpha$ are functionally bounded in $X$ and, clearly, the family $\AAA$ is $\NN^\NN$-increasing. To show that $\AAA$ swallows  functionally bounded subsets of $X$, suppose for a contradiction that there exists a functionally bounded subset $A$ in $X$ such that $A\not\subseteq A_\alpha$ for every $\alpha\in\NN^\NN$.  Set
\[
B:=\{ \chi\in L(X): \; \supp(\chi)\subseteq A \mbox{ and } \|\chi\| \leq 1\}.
\]
Then $B$ is bounded in $L(X)$ by Proposition \ref{p:bounded-in-L(X)}. Therefore there is $\beta\in \NN^\NN$ such that $B\subseteq B_\beta$. But then $A=\supp(B)\subseteq \supp(B_\beta)=A_\beta$, a contradiction.

Conversely, let $\{ A_\alpha\}_{\alpha\in\NN^\NN}$ be a fundamental functionally bounded resolution in $X$. For every $\alpha=\big( \alpha(n)\big)\in\NN^\NN$, set
\[
B_\alpha:= \{ \chi\in L(X): \; \supp(\chi)\subseteq  A_\alpha \mbox{ and } \|\chi\| \leq \alpha(1)\}.
\]
Then,  by Proposition \ref{p:bounded-in-L(X)}, $B_\alpha$ is bounded in $L(X)$ for every $\alpha\in \NN^\NN$. We claim  that $\mathcal{B}=\{ B_\alpha\}_{\alpha\in\NN^\NN}$ is  a fundamental bounded resolution in $L(X)$. Indeed, it is clear that $\mathcal{B}$ is $\NN^\NN$-increasing. To show that $\mathcal{B}$ swallows the bounded sets of $L(X)$, let $B$ be a bounded subset of $L(X)$. Then, by Proposition \ref{p:bounded-in-L(X)}, $\supp(B)$ is functionally bounded in $X$ and $C_B =\sup\{ \| \chi\|: \chi\in B\}$ is finite. Choose $\alpha\in \NN^\NN$ such that $\supp(B) \subseteq A_\alpha$ and take $m\in \NN$ such that $m>C_B$. Set $\beta:= \alpha+(m)$, where $(m)$ is the constant sequence with entries $m$. Then $B\subseteq B_\beta$ by the definition of $B_\beta$. Thus $\mathcal{B} $ is  a fundamental bounded resolution in $L(X)$. \qed
\end{proof}


\section{Proof of main results} \label{seq:Main}


Let us recall some basic definitions. A {\em barrel} in an lcs $E$ is an absorbing absolutely convex closed set; an {\em $\aleph_0$-barrel} is a barrel $U$ that is the intersection of a sequence $\{ U_n\}_{n\in\w}$ of absolutely convex closed neighborhoods of zero.
An lcs $E$ is
\begin{enumerate}
\item[$\bullet$] {\em barrelled} if every barrel is a neighborhood of zero;
\item[$\bullet$] {\em $\aleph_0$-barrelled} if every $\aleph_0$-barrel is a neighborhood of zero;
\item[$\bullet$] {\em $\aleph_0$-quasibarrelled} if every $\beta(E',E)$-bounded subset of $E'$ which is the countable union of equicontinuous sets is itself equicontinuous;
\item[$\bullet$] {\em $\ell_\infty$-barrelled} if every $\sigma(E',E)$-bounded sequence is equicontinuous;
\item[$\bullet$] {\em $\ell_\infty$-quasibarrelled} if every $\beta(E',E)$-bounded sequence is equicontinuous;
\item[$\bullet$] {\em $c_0$-barrelled} if every $\sigma(E',E)$-null sequence is equicontinuous;
\item[$\bullet$] {\em $c_0$-quasibarrelled} if every $\beta(E',E)$-null sequence is equicontinuous;
\item[$\bullet$] {\em distinguished} if its strong dual is barrelled.
\end{enumerate}

The next proposition is crucial to prove Theorem \ref{t:L(X)-c0-barrelled}.
\begin{proposition} \label{p:Mackey-space-L(X)-0}
Let $X$ be a Tychonoff non-discrete space such that there exist a point $z\in X$, a sequence $\{ g_i\}_{i\in\w}$ of continuous functions from $X$ to $[0,2]$ and a sequence $\{ U_i\}_{i\in\w}$ of open subsets of $X$ such that
\begin{enumerate}
\item[{\rm (i)}] $\supp(g_i) \subseteq U_i $ for every $i\in\w$;
\item[{\rm (ii)}] $U_i\cap U_j=\emptyset $ for all distinct $i,j\in\w$;
\item[{\rm (iii)}] $z\not\in U_i$ for every $i\in\w$ and $z\in \cl\big(\bigcup_{i\in\w} \{ x\in X: g_i(x)\geq 1\}\big)$.
\end{enumerate}
Then the sequence $\{ g_i\}_{i\in\w}$ is a $\sigma(C(X),L(X))$-null and $\beta(C(X),L(X))$-bounded non-equicontinuous subset of the dual space $C(X)$ of $L(X)$.
\end{proposition}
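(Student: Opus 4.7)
The statement asserts three things about $\{g_i\}_{i \in \omega}$ viewed as a subset of $C(X) = L(X)'$: that it is weak-$\ast$ null, strongly bounded, and not equicontinuous. I would verify each of these separately. The first two follow from the pairwise disjointness of the $U_i$'s together with the description of bounded sets in $L(X)$ given by Proposition~\ref{p:bounded-in-L(X)}. The third, which is the real content, would be proved by contradiction, using Proposition~\ref{p:equicontinuity-C(X)} to convert equicontinuity as dual functionals into ordinary equicontinuity at $z \in X$, which then contradicts hypothesis (iii).

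\textbf{Weak-$\ast$ nullity and strong boundedness.} An arbitrary $\chi \in L(X)$ has the form $\chi = a_1 x_1 + \cdots + a_n x_n$ for finitely many distinct points $x_1, \dots, x_n \in X$. Because the $U_i$ are pairwise disjoint by (ii) and $\supp(g_i) \subseteq U_i$ by (i), each $x_j$ lies in at most one $U_i$, so only finitely many indices $i$ satisfy $g_i(x_j) \neq 0$ for some $j$. For all other $i$ one has $g_i(\chi) = 0$, which gives $g_i(\chi) \to 0$ and establishes that $\{g_i\}$ is $\sigma(C(X),L(X))$-null. For strong boundedness, let $A \subseteq L(X)$ be bounded. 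By Proposition~\ref{p:bounded-in-L(X)}, $C_A = \sup\{\|\chi\| : \chi \in A\} < \infty$. Writing $\chi \in A$ as above, and using the disjointness of the $U_i$ together with $g_i$ taking values in $[0,2]$, we get
\[
|g_i(\chi)| \leq \sum_{j=1}^{n} |a_j|\, g_i(x_j) \leq 2 \|\chi\| \leq 2 C_A,
\]
uniformly in $i$. Hence $\sup_i \sup_{\chi \in A} |g_i(\chi)| < \infty$, so $\{g_i\}$ is $\beta(C(X),L(X))$-bounded.

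\textbf{Failure of equicontinuity.} This is the main step. Suppose, toward a contradiction, that $\{g_i\}$ is equicontinuous as a subset of the dual of $L(X)$. Then Proposition~\ref{p:equicontinuity-C(X)} implies $\{g_i\}$ is equicontinuous as a subset of $C(X)$; in particular, it is equicontinuous at $z$. Note that $g_i(z) = 0$ for every $i$, since $z \notin U_i$ by (iii) and $\supp(g_i) \subseteq U_i$ by (i). So with $\varepsilon = 1/2$, there exists an open neighborhood $W$ of $z$ in $X$ such that
\[
|g_i(x)| = |g_i(x) - g_i(z)| < \tfrac{1}{2} \quad \text{for every } x \in W \text{ and every } i \in \omega.
\]
But by (iii), $z \in \cl\!\big(\bigcup_{i \in \omega} \{x \in X : g_i(x) \geq 1\}\big)$, so there exist $i_0 \in \omega$ and $x_0 \in W$ with $g_{i_0}(x_0) \geq 1$, contradicting the previous display. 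Therefore $\{g_i\}$ is not equicontinuous, completing the proof.

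The only place that requires real input is the equicontinuity step, and even there the work is entirely absorbed by Proposition~\ref{p:equicontinuity-C(X)}; the rest is bookkeeping with disjoint supports.
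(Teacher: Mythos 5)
Your proof is correct. The verification of weak-$\ast$ nullity and strong boundedness is the same bookkeeping with disjoint supports and Proposition~\ref{p:bounded-in-L(X)} that the paper uses (the paper phrases boundedness via polars, rescaling by $\lambda=1/(4C+4)$, but the estimate is identical to yours). Where you genuinely diverge is the non-equicontinuity step: the paper argues directly, taking an arbitrary basic neighborhood $[A;\e]^\circ$ from Ra\u{\i}kov's Theorem~\ref{t:topology-L(X)-Raikov}, producing the element $\eta=x_i-z\in[A;\e]$ from hypothesis (iii), and checking $|2g_i(\eta)|\geq 2>1$ (this is why the paper works with $2g_i$ rather than $g_i$ -- the polar condition requires a value strictly exceeding $1$). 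You instead argue by contradiction and delegate the entire step to Proposition~\ref{p:equicontinuity-C(X)}, which converts equicontinuity in $L(X)'$ into equicontinuity of $\{g_i\}$ at $z$ as functions on $X$, and then (iii) gives an immediate contradiction with $g_i(z)=0$. The two arguments have the same mathematical core -- the proof of Proposition~\ref{p:equicontinuity-C(X)} itself evaluates functionals at point differences $n(x-z)$ -- but your packaging is more economical, avoids the rescaling by $2$, and makes explicit that the only real input is the passage between the two notions of equicontinuity; the paper's version has the minor advantage of being self-contained modulo Ra\u{\i}kov's description of the topology.
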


\begin{proof}
It will be convenient to show that the sequence $S=\{ 2g_i\}_{i\in\w}$ satisfies the conclusion of the proposition.
It easily follows from (i) and (ii) that $2g_i\to 0$ in the pointwise topology, and hence $S$ is $\sigma(C(X),L(X))$-null. To show that $S$ is $\beta(C(X),L(X))$-bounded, fix an arbitrary bounded subset $M$ of $L(X)$. By Proposition \ref{p:bounded-in-L(X)}, the number $C:=\sup\{ \| \chi\|: \chi\in M\} $ is finite. Set $\lambda:= 1/(4C+4)$. Then, for every $i\in\w$ and each $\chi =a_1 x_1 +\cdots + a_m x_m\in M$ with distinct $x_1,\dots,x_m$ and nonzero $a_1,\dots,a_m\in\IR$, we obtain
\[
|\lambda 2g_i(\chi)|=\lambda |a_1 2g_i(x_1) +\cdots + a_m 2g_i(x_m)|\leq \lambda \cdot 4\big( |a_1|+\cdots+|a_m|\big) \leq \lambda \cdot 4 C<1,
\]
and hence  $\lambda 2g_i\in M^\circ$. Thus $S$ is $\beta(C(X),L(X))$-bounded.

To show that $S$ is not equicontinuous, fix a pointwise bounded and equicontinuous subset $A$ of $C(X)$ and $\e>0$. We have to show that $S\not\subseteq [A;\e]^\circ$. Observe that
\begin{equation} \label{equ:barrelledness-L(X)-1}
[A;\e]=\{ \chi\in L(X): |\chi(f)|<\e \; \mbox{ for all } f\in A\}.
\end{equation}
Since $A$ is equicontinuous, there is a neighborhood $U$ of $z$ such that
\begin{equation} \label{equ:barrelledness-L(X)-2}
|f(x)-f(z)|<\e \; \mbox{ for all } f\in A \mbox{ and } x\in U.
\end{equation}
 By (iii), there are $i\in\w$ and $x_i\in X$ such that $x_i\in U$ and $g_i(x_i)\geq 1$. Set $\eta:= x_i -z$. Then  (\ref{equ:barrelledness-L(X)-2}) implies
\[
|\eta(f)|=|f(x_i)-f(z)|<\e \; \mbox{ for all } f\in A,
\]
and hence, by (\ref{equ:barrelledness-L(X)-1}), $\eta\in [A;\e]$. But (i) and (iii) imply
\[
|2g_i(\eta)|=|2g_i(x_i)- 2g_i(z)|=2g_i(x_i)\geq 2.
\]
Thus $2g_i\not\in [A;\e]^\circ$ and hence $S$ is not equicontinuous. \qed
\end{proof}

Now we are ready to prove the first main result of the paper.

\smallskip
{\em Proof of Theorem \ref{t:L(X)-c0-barrelled}.}
We shall consider continuous functions on $X$ also as  continuous functionals of $L(X)$.
The implications  (i)$\Rightarrow$(ii), (i)$\Rightarrow$(iii)  and (iv)$\Rightarrow$(ii)  are clear.

\smallskip
(ii)$\Rightarrow$(v) and (iii)$\Rightarrow$(v) follow from the following two claims.

\smallskip
{\em Claim 1. The space $X$ is zero-dimensional.} Indeed, suppose for a contradiction that $X$ is not zero-dimensional. Then, by (the proof of) Proposition 2.4 of \cite{Gabr-L(X)-Mackey},  there exist a point $z\in X$, a sequence $\{ g_i\}_{i\in\w}$ of continuous functions from $X$ to $[0,2]$ and a sequence $\{ U_i\}_{i\in\w}$ of open subsets of $X$ which satisfy (i)-(iii) of Proposition \ref{p:Mackey-space-L(X)-0}. Now  Proposition \ref{p:Mackey-space-L(X)-0} implies that  the sequence $\{ g_i\}_{i\in\w}$, being  $\sigma(C(X),L(X))$-null and $\beta(C(X),L(X))$-bounded, is not equicontinuous. Thus $L(X)$ is neither $c_0$-barrelled nor $\ell_\infty$-quasibarrelled, a contradiction.

\smallskip
{\em Claim 2. The zero-dimensional space $X$ is a $P$-space.} Suppose for a contradiction that $X$ is not a $P$-space.
Then there exist a point $x_0\in X$ and a sequence $\{ V_i\}_{i\in\w}$ of open  neighborhoods of $x$ such that $\bigcap_{n\in\w} V_n$ is not a neighborhood of $x_0$.
 Since $X$ is zero-dimensional and $\bigcap_{n\in\w} V_n$ is not a neighborhood of $x_0$, we can assume that all $V_n$ are clopen neighborhoods of $x_0$ and $V_{n+1}$ is a proper subset of $V_n$ for every $n\in\w$. For every $n\in\w$, set
\[
U_n := V_n\setminus V_{n+1} \; \mbox{ and } \; g_n :=  \mathbf{1}_{U_n}.
\]
 Since $\bigcup_{n\in\w} U_n = V_0 \setminus \bigcap_{n\in\w} V_n$, we obtain $x_0 \in \overline{ \bigcup_{n\in\w} U_n}$. Therefore the point $x_0$, and the sequences $\{ U_n\}_{n\in\w}$ and $S=\{ g_n\}_{n\in\w}$ satisfy (i)-(iii) of Proposition \ref{p:Mackey-space-L(X)-0}. Therefore  the sequence $S$, being  $\sigma(C(X),L(X))$-null and $\beta(C(X),L(X))$-bounded, is  not equicontinuous. Thus $L(X)$ is neither $c_0$-barrelled nor $\ell_\infty$-quasibarrelled, a contradiction.

\smallskip
(v)$\Rightarrow$(i) 
Fix an arbitrary $\sigma\big( C(X),L(X)\big)$-bounded sequence $S=\{ f_n\}_{n\in\w}$ in the dual space $L(X)'=C(X)$ of $L(X)$. Then, by (i) of Proposition \ref{p:P-space-Ascoli}, $S$ is a pointwise bounded and equicontinuous subset of $C(X)$. Therefore, by Theorem  \ref{t:topology-L(X)-Raikov}, $[S;1]$ is a neighborhood of zero in $L(X)$. It is clear that $S\subseteq [S;1]^\circ$. Thus $S$ is equicontinuous in $L(X)'$.

\smallskip
(v)$\Rightarrow$(iv) Let $A$ be a  $\beta(C(X),L(X))$-bounded subset of $L(X)'=C(X)$ which is the union of a sequence $\{ A_n\}_{\in\w}$ of equicontinuous subsets of $C(X)$. We have to show that $A$ is equicontinuous. 
Since $A$ is strongly bounded 
and $X$ is a subspace of $L(X)$, we obtain that $A$ is a pointwise bounded subset of $C(X)$. Let us show that $A$ is also an equicontinuous subset of $C(X)$. Fix an arbitrary point $z\in X$ and $\e>0$. Since all $A_n$ are equicontinuous subsets of the dual of $L(X)$, Proposition \ref{p:equicontinuity-C(X)} implies that all $A_n$ are equicontinuous subsets of the function space $C(X)$. Therefore, for every $n\in\w$, there is an open neighborhood $U_n$ of $z$ such that
\begin{equation} \label{equ:L(X)-aleph-quasibarrelled-1}
|f(x)-f(z)|<\e, \quad \mbox{ for all } x\in U_n \mbox{ and } f\in A_n.
\end{equation}
As $X$ is a $P$-space, the set $U:= \bigcap_{n\in\w} U_n$ is an open neighborhood of $z$, and  (\ref{equ:L(X)-aleph-quasibarrelled-1}) implies
\[
|f(x)-f(z)|<\e, \quad \mbox{ for all } x\in U \mbox{ and } f\in A.
\]
Thus $A$ is equicontinuous and hence, by Theorem \ref{t:topology-L(X)-Raikov}, the set $[A;1]$ is a neighborhood of zero in $L(X)$. Finally, the Alaoglu theorem and Theorem~11.11.5 of \cite{NaB} imply that $[A;1]^{\circ}$ and hence $A$ are $\beta(C(X),L(X))$-bounded.  \qed


\medskip

Let $X$ be  a Tychonoff space. A subset $S$ of $C(X)$ is called {\em uniformly bounded on a subset $A$ of $X$} if there is $C_S>0$ such that $|f(x)|\leq C_S$ for every $x\in A$ and each $f\in S$. Below we describe strongly bounded subsets of the dual $C(X)$ of $L(X)$.
\begin{proposition} \label{p:strongly-bounded-in-C(X)}
A subset $S$ of $C(X)$ is $\beta\big( C(X),L(X)\big)$-bounded if and only if $S$ is uniformly bounded on every functionally bounded subset of $X$.
\end{proposition}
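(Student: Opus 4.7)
The plan is to reduce both directions to the characterization of bounded sets in $L(X)$ given by Proposition \ref{p:bounded-in-L(X)}, combined with the elementary observation that $|f(\chi)| \leq \|f\|_A \cdot \|\chi\|$ whenever $\supp(\chi) \subseteq A$ and $|f| \leq \|f\|_A$ on $A$.

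For the forward implication, I would take a functionally bounded subset $A$ of $X$ and observe that, viewed as a subset of $L(X)$ via the canonical embedding $X \hookrightarrow L(X)$, the set $A$ itself satisfies $\supp(A) = A$ and $C_A = 1$ (each $x \in X \subseteq L(X)$ has norm $1$ as a Dirac-type element). By Proposition \ref{p:bounded-in-L(X)}, $A$ is therefore bounded in $L(X)$. Since $S$ is $\beta(C(X),L(X))$-bounded, $\sup_{f\in S, x\in A}|f(x)| < \infty$, i.e. $S$ is uniformly bounded on $A$.

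For the reverse implication, I would take an arbitrary bounded subset $B$ of $L(X)$ and apply Proposition \ref{p:bounded-in-L(X)} to conclude that $\supp(B)$ is functionally bounded in $X$ and $C_B := \sup\{\|\chi\| : \chi \in B\} < \infty$. By hypothesis applied to the functionally bounded set $\supp(B)$, there is $C_S > 0$ with $|f(x)| \leq C_S$ for every $x \in \supp(B)$ and every $f\in S$. Then for any $\chi = a_1 x_1 + \cdots + a_n x_n \in B$ (with $x_i\in \supp(B)$) and any $f\in S$,
\[
|f(\chi)| \leq \sum_{i=1}^n |a_i|\,|f(x_i)| \leq C_S\, \|\chi\| \leq C_S\, C_B,
\]
so $\sup_{f\in S,\, \chi\in B}|f(\chi)| \leq C_S C_B < \infty$, which is exactly the definition of $\beta(C(X),L(X))$-boundedness of $S$.

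There is no real obstacle here: once Proposition \ref{p:bounded-in-L(X)} is in hand, the proof is a one-line linearity estimate in each direction. The only thing to be slightly careful about is identifying the points of $X$ with the corresponding Dirac elements of $L(X)$ so that a functionally bounded subset of $X$ genuinely sits inside $L(X)$ as a bounded set, which is exactly the content of Proposition \ref{p:bounded-in-L(X)} applied to singletons.
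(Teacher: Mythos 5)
Your proof is correct and follows essentially the same route as the paper: both directions reduce to Proposition \ref{p:bounded-in-L(X)} together with the linearity estimate $|f(\chi)|\leq C_S\|\chi\|$. The only cosmetic difference is that in the forward direction the paper tests $S$ against the bounded set $\{\chi\in L(X):\supp(\chi)\subseteq A,\ \|\chi\|\leq 1\}$ rather than against $A$ itself viewed inside $L(X)$, which changes nothing.
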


\begin{proof}
Assume that $S$ is $\beta\big( C(X),L(X)\big)$-bounded and let $A$ be a functionally bounded subset of $X$. Set
\[
B:=\{ \chi\in L(X): \supp(\chi)\subseteq A \; \mbox{ and }\; \|\chi\| \leq 1\}.
\]
Then, by Proposition \ref{p:bounded-in-L(X)},  $B$ is a bounded subset of $L(X)$. Therefore there exists $\lambda>0$ such that $S\subseteq \lambda B^\circ$. Since every $x\in A$ considered as an element of $L(X)$ belongs to $B$, we obtain
\[
|f(x)|\leq \lambda \; \mbox{ for every } f\in S.
\]
Thus $S$ is uniformly bounded on $A$.

Conversely, assume that $S$ is uniformly bounded on every functionally bounded subset of $X$. Fix a bounded subset $B$ of $L(X)$. Then,  by Proposition \ref{p:bounded-in-L(X)},  the set $A:=\mathrm{supp}(B)$ is functionally bounded in $X$ and the number $C_B=\sup\{ \| \chi\|: \chi\in B\}$ is finite. Therefore there is $\lambda >0$ such that $|f(x)|\leq \lambda$ for every $x\in A$ and each $f\in S$. Hence
\[
|f(\chi)|\leq \lambda \cdot \|\chi\| \leq \lambda \cdot C_B, \; \mbox{ for every } \chi\in B \mbox{ and } f\in S.
\]
Therefore $S\subseteq (\lambda C_B) \cdot B^\circ$. Since $B$ was arbitrary, we obtain that $S$ is a strongly bounded subset of $C(X)$. \qed
\end{proof}

\begin{proposition} \label{p:strong-dual-of-L(X)}
For every Tychonoff space $X$,   the restriction map $R: L(X)'_\beta \to \CC(X)$, $R(F):= F|_X$, is a continuous isomorphism. If in addition  $X$ is a $\mu$-space, then $R$ is a topological isomorphism, and hence $L(X)$ is distinguished.
\end{proposition}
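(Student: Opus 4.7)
The plan has four steps. First, I would observe that $R$ is an algebraic isomorphism: the universal property of $L(X)$ gives, for each $f\in C(X)$, a unique continuous linear functional $\bar f\in L(X)'$ with $\bar f|_X=f$, so $f\mapsto \bar f$ is the inverse of $R$. This part is just the definition of $L(X)$.

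Second, I would prove continuity of $R: L(X)'_\beta \to \CC(X)$. A basic neighborhood of zero in $\CC(X)$ has the form $[K;\e]$ with $K\subseteq X$ compact and $\e>0$. Set
\[
B := \left\{ tx : x\in K,\; |t|\leq 2/\e\right\}\subseteq L(X).
\]
Since $K$ is compact (hence functionally bounded) and $C_B = 2/\e <\infty$, Proposition~\ref{p:bounded-in-L(X)} gives that $B$ is bounded in $L(X)$; thus $B^\circ$ is a neighborhood of zero in $L(X)'_\beta$. For $F\in B^\circ$, applying $|F(tx)|\leq 1$ with $|t|=2/\e$ yields $|F(x)|\leq \e/2<\e$ for every $x\in K$, so $R(F)\in [K;\e]$. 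Hence $R$ is continuous.

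Third, assume $X$ is a $\mu$-space and prove $R^{-1}$ is continuous. A basic neighborhood of zero in $L(X)'_\beta$ is $B^\circ$ for $B$ bounded in $L(X)$. By Proposition~\ref{p:bounded-in-L(X)}, $\supp(B)$ is functionally bounded and $C_B<\infty$; since $X$ is a $\mu$-space, $K:=\overline{\supp(B)}$ is compact in $X$. Take $\e := 1/(2C_B)$ (and $\e:=1$ if $C_B=0$). For any $f\in [K;\e]$ and any $\chi=\sum_{i=1}^n a_i x_i \in B$ (with $x_i\in\supp(B)\subseteq K$), continuity of $\bar f$ and $|f(x_i)|<\e$ give
\[
|\bar f(\chi)| = \left|\sum_{i=1}^n a_i f(x_i)\right| \leq \e\sum_{i=1}^n |a_i| = \e\cdot\|\chi\| \leq \e\cdot C_B \leq 1/2,
\]
so $\bar f\in B^\circ$, i.e., $R^{-1}([K;\e])\subseteq B^\circ$. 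Hence $R^{-1}$ is continuous, and $R$ is a topological isomorphism.

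Finally, to conclude that $L(X)$ is distinguished when $X$ is a $\mu$-space, note that distinguishedness means $L(X)'_\beta$ is barrelled. By the topological isomorphism just established, $L(X)'_\beta\cong \CC(X)$, and by the Nachbin--Shirota theorem (cited in the introduction) $\CC(X)$ is barrelled precisely because $X$ is a $\mu$-space. The main (mild) obstacle is keeping the strict-versus-non-strict inequalities in $[K;\e]$ and $B^\circ$ aligned, which is why I build in a factor of $2$ in the choices of $B$ and $\e$.
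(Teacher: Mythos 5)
Your proposal is correct and follows essentially the same route as the paper: bounded sets of $L(X)$ are identified via Proposition \ref{p:bounded-in-L(X)}, continuity is obtained by polaring a bounded set supported in $K$ with norms at most $2/\e$, openness (equivalently, continuity of $R^{-1}$) comes from $[K;\e]\subseteq R(B^\circ)$ with $K=\overline{\supp(B)}$ compact in the $\mu$-space case, and distinguishedness follows from Nachbin--Shirota. The only cosmetic differences are your slightly smaller choice of bounded set $\{tx: x\in K,\ |t|\le 2/\e\}$ and the extra factor of $2$ in the openness step, neither of which changes the argument.
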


\begin{proof}
It is well known (and easy to show) that $R$ is a linear isomorphism. To prove that $R$ is continuous, fix a standard neighborhood $[K;\e]$ of zero in $\CC(X)$. Define
\[
B:=\left\{ \chi\in L(X): \supp(\chi)\subseteq K \mbox{ and } \| \chi\| \leq \frac{2}{\e} \right\}.
\]
Then, by Proposition \ref{p:bounded-in-L(X)},  $B$ is a bounded subset of $L(X)$. Since $(2/\e)x \in B$ for every $x\in K$, it follows that
\[
\left| \frac{2}{\e} g(x)\right| = \left| g\left(\frac{2}{\e} x\right)\right| \leq 1, \quad \mbox{ for every } g\in B^\circ.
\]
Therefore $R(B^\circ) \subseteq [K;\e]$ and hence $R$ is continuous.

Assume that  $X$ is a $\mu$-space. To show that $R$ is also open, fix a nonzero bounded subset $A$ of $L(X)$. Then, by Proposition \ref{p:bounded-in-L(X)}, $\supp(A)$ is functionally bounded in $X$ and the number $C_A=\sup\{ \|\chi\|: \chi\in A\}$ is finite. Define $K:=\overline{\supp(A)}$ and $\e:= 1/C_A$. Since $X$ is a $\mu$-space, $K$ is a compact subset of $X$. Hence, for each $f\in [K;\e]$ and for every   $\chi = a_1 x_1+\cdots +a_n x_n\in A$ with distinct $x_1,\dots, x_n\in \supp(A)$ and  nonzero $a_1,\dots,a_n\in\IR$, we obtain
\[
|f(\chi)|=| a_1 f(x_1)+\cdots +a_n f(x_n)|\leq \e ( |a_1|  +\cdots + |a_n|)\leq \e \cdot C_A =1.
\]
Therefore $f\in R(A^\circ)$ and hence $[K;\e] \subseteq R(A^\circ)$. Thus $R$ is open.

 As we proved $L(X)'_\beta =\CC(X)$. Since $X$ is a $\mu$-space, the Nachbin--Shirota theorem implies that $\CC(X)$ is barrelled.  Thus $L(X)$ is distinguished. \qed
\end{proof}

Below we prove Theorem \ref{t:L(X)-c0-quasibarrelled}.

\smallskip
{\em Proof of Theorem \ref{t:L(X)-c0-quasibarrelled}}.
Fix a $\beta\big( C(X),L(X)\big)$-null sequence $S=\{ f_n\}_{n\in\w}$ in $C(X)$. We have to show that $S$ is equicontinuous in $C(X)=L(X)'$. Proposition \ref{p:strong-dual-of-L(X)} implies that the strong dual $L(X)'_\beta =\big( C(X), \beta\big( C(X),L(X)\big)\big)$ of $L(X)$ is topologically isomorphic to $\CC(X)$. Since $X$ is sequentially Ascoli,  $S$ is an equicontinuous sequence of continuous functions on $X$. Clearly, $S$ is pointwise bounded and hence, by Theorem \ref{t:topology-L(X)-Raikov}, $[S;1]$ is a neighborhood of zero in $L(X)$. Now the inclusion $S\subseteq [S;1]^\circ$ implies that $S$ is equicontinuous as a subset of the dual space of $L(X)$. Thus $L(X)$ is a $c_0$-quasibarrelled space. \qed

\medskip

The next assertion provides a sufficient condition on $X$ such that  $L(X)$ is not a $c_0$-quasibarrelled space.
\begin{proposition} \label{p:L(X)-not-c0-quasibarrelled}
Let $X$ be a $\mu$-space which admits a countable family $\U =\{ U_i : i\in \w\}$ of open subsets of $X$, a subset $A=\{ a_i : i\in \w\} \subseteq X$ and a point $z\in X$ satisfying (i)-(iii) of Proposition \ref{p:sequentially-Ascoli-sufficient}. Then $L(X)$ is not a $c_0$-quasibarrelled space.
\end{proposition}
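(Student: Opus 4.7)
The plan is to exhibit a $\beta\bigl(C(X),L(X)\bigr)$-null sequence in the dual $L(X)'=C(X)$ that fails to be equicontinuous, directly contradicting $c_0$-quasibarrelledness. Since $X$ is Tychonoff and $a_i\in U_i$, I can apply complete regularity to choose, for each $i\in\omega$, a continuous function $g_i\colon X\to[0,1]$ with $g_i(a_i)=1$ and $g_i\equiv 0$ on $X\setminus U_i$. I will show the sequence $S=\{g_i\}_{i\in\omega}$ is the desired witness.

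For the strong null property, I would invoke Proposition \ref{p:strong-dual-of-L(X)}: since $X$ is a $\mu$-space, the strong dual $L(X)'_\beta$ is topologically isomorphic (via restriction) to $\CC(X)$. So it is enough to show $g_i\to 0$ in the compact-open topology. Given any compact $K\subseteq X$ and any $\varepsilon>0$, condition (ii) of Proposition \ref{p:sequentially-Ascoli-sufficient} yields a finite set $F\subseteq\omega$ with $U_i\cap K=\emptyset$ for all $i\in\omega\setminus F$; for such $i$, $g_i$ vanishes on $K$, hence lies in the basic neighborhood $[K;\varepsilon]$ of $0$ in $\CC(X)$.

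For non-equicontinuity, by the contrapositive of Proposition \ref{p:equicontinuity-C(X)} it suffices to prove that $S$ fails to be equicontinuous at some point \emph{as a subset of $C(X)$}. The natural candidate is the cluster point $z$. Applying condition (ii) to the compact singleton $\{z\}$ provides $N\in\omega$ with $z\notin U_i$ for all $i\geq N$; consequently $g_i(z)=0$ whenever $i\geq N$. Now fix $\varepsilon=1/2$ and any neighborhood $V$ of $z$. By condition (iii), $z$ is a cluster point of $\{a_i:i\in\omega\}$, so the set $\{i\in\omega: a_i\in V\}$ is infinite; in particular it contains some $i\geq N$. For this index, $|g_i(a_i)-g_i(z)|=1>\varepsilon$, which demonstrates that $S$ is not equicontinuous at $z$.

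Combining both steps, $S$ is a $\beta\bigl(C(X),L(X)\bigr)$-null sequence in $L(X)'$ which is not equicontinuous, so $L(X)$ is not $c_0$-quasibarrelled. The only delicate point is ensuring that the values $g_i(z)$ vanish for large $i$ so that a jump of size $1$ at $a_i$ translates into a genuine failure of equicontinuity at $z$; this is precisely what condition (ii), applied to the compact set $\{z\}$, buys us, and the rest is routine.
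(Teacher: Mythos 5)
Your proof is correct and follows essentially the same route as the paper: identify $L(X)'_\beta$ with $\CC(X)$ via Proposition~\ref{p:strong-dual-of-L(X)} (using that $X$ is a $\mu$-space), exhibit a $\tau_k$-null sequence that is not equicontinuous at $z$, and transfer the failure of equicontinuity back to the dual of $L(X)$ via Proposition~\ref{p:equicontinuity-C(X)}. The only difference is that where the paper simply cites Proposition~\ref{p:sequentially-Ascoli-sufficient} for the witness sequence, you construct it explicitly with Urysohn functions supported in the $U_i$ --- which is precisely the proof of that proposition that the paper omits.
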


\begin{proof}
By Proposition \ref{p:sequentially-Ascoli-sufficient}, the space $\CC(X)$ has a convergent sequence $S=\{ f_n\}_{n\in\w}$ which is not equicontinuous.
Proposition \ref{p:strong-dual-of-L(X)} implies that the strong dual of $L(X)$ is topologically isomorphic to $\CC(X)$, and hence $S$ is strongly null. On the other hand, by Proposition \ref{p:equicontinuity-C(X)}, $S$ is not equicontinuous as a subset of the dual space of $L(X)$. Thus $L(X)$ is not a $c_0$-quasibarrelled space. \qed
\end{proof}

\begin{example} \label{exa:non-c0-quasibarrelled-1} {\em
Let $X$ be an infinite-dimensional Banach space endowed with the weak topology. Then, by (the proof of) Proposition 3.2, respectively, the space $X$ admits a countable family $\U$, a countable subset $A$ and a point $z\in X$ satisfying conditions of Proposition \ref{p:L(X)-not-c0-quasibarrelled}. A result of Valdivia \cite{Valdivia-77} (which states that every quasi-complete space $E$ in the weak topology is a $\mu$-space) implies that $X$ is a $\mu$-space. Thus, by Proposition \ref{p:L(X)-not-c0-quasibarrelled}, $L(X)$ is not a $c_0$-quasibarrelled space.} \qed
\end{example}


Following \cite{FGK-quasi-DF}, an lcs $(E,\tau)$ is called a {\em quasi-$(DF)$-space} if $E$ admits a fundamental bounded resolution and belongs to the class $\mathfrak{G}$.
For the definition of the class $\mathfrak{G}$, we refer the reader to the original paper \cite{CasOr}   by Cascales and Orihuela or to the book \cite[Section~11]{kak}. 
Below we prove Theorem \ref{t:L(X)-DF-space}.

\smallskip
{\em Proof of Theorem \ref{t:L(X)-DF-space}}.
The implication (i)$\Rightarrow$(ii) is trivial. To prove the implication (ii)$\Rightarrow$(iii) assume that $L(X)$ is an almost $(DF)$-space. Then $X$ has a fundamental functionally bounded sequence $\AAA=\{ A_n\}_{n\in\NN}$ by Proposition \ref{p:ffbs-ffbr-L(X)} and is a $P$-space by Theorem \ref{t:L(X)-c0-barrelled}. Now Lemma \ref{l:bounded-in-P-space} implies that all $A_n$ are finite. Therefore $X$ is countable. Since every countable $P$-space is discrete, we obtain that $X$ is a countable discrete space.

(iii)$\Rightarrow$(i) If $X$ is finite, then $L(X)=\IR^{|X|}$ is trivially a $(DF)$-space. If $X$ is countably infinite and hence $L(X)=\phi$, then $L(X)$ is a $(DF)$-space by Theorem 12.4.8 of \cite{Jar}. \qed

\smallskip

\begin{example} \label{exa:non-c0-quasibarrelled}
There is a countable Tychonoff space $X$ such that  $L(X)$ is a  quasi-$(DF)$-space, but it is not a $c_0$-quasibarrelled space.
\end{example}

\begin{proof}
Let $X$ be the countable Tychonoff space defined in Example~4.10 of \cite{FGK-quasi-DF}. Then, as we proved in \cite{FGK-quasi-DF}, $L(X)$ is a  quasi-$(DF)$-space. To show that $L(X)$ is not $c_0$-quasibarrelled, we note first that any countable space is Lindel\"{o}f and hence a $\mu$-space.
Therefore, by Proposition \ref{p:strong-dual-of-L(X)},  the strong dual space of $L(X)$ is topologically isomorphic to $\CC(X)$. Since every compact subset of $X$ is finite (see \cite[Example~4.10]{FGK-quasi-DF}),  Proposition \ref{p:countable-non-sequen-Ascoli} implies that $\CC(X)$ has a convergent sequence $S$ which is not equicontinuous as a subset of $C(X)$. Hence, by Proposition \ref{p:equicontinuity-C(X)},  $S$ is not equicontinuous as a subset of the dual space of $L(X)$. Thus $L(X)$ is not a $c_0$-quasibarrelled space. \qed
\end{proof}

In particular, the space $L(X)$ from Example \ref{exa:non-c0-quasibarrelled}  is not a $(df)$-space. On the other hand, there are also $(df)$-spaces $L(X)$ which are not quasi-$(DF)$-spaces.

\begin{proposition}  \label{p:df-non-quasi-DF}
If $K$ is a non-metrizable compact space, then $L(K)$ is a $(df)$-space but not a quasi-$(DF)$-space.
\end{proposition}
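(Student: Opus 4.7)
The plan is to verify the two parts of the statement separately, both relying on machinery already developed in the paper plus one external theorem from Cascales--Orihuela.

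To see that $L(K)$ is a $(df)$-space, I would first exhibit a fundamental bounded sequence and then apply Theorem~\ref{t:L(X)-c0-quasibarrelled}. Since $K$ is compact, the constant sequence $A_n := K$ is trivially a fundamental functionally bounded sequence in $K$ (every functionally bounded set is a subset of $K$), so Proposition~\ref{p:ffbs-ffbr-L(X)}(i) produces a fundamental bounded sequence in $L(K)$. Concretely one may take $B_n := \{\chi \in L(K) : \supp(\chi) \subseteq K,\ \|\chi\| \leq n\}$, which is bounded by Proposition~\ref{p:bounded-in-L(X)} and swallows every bounded set. For the $c_0$-quasibarrelled condition, observe that $K$ is a $\mu$-space (any compact space is), and $K$ is a $k$-space, hence Ascoli by the classical Ascoli theorem cited in the introduction, hence sequentially Ascoli; so Theorem~\ref{t:L(X)-c0-quasibarrelled} applies.

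To see that $L(K)$ is not a quasi-$(DF)$-space, the point is that it cannot lie in the class $\mathfrak{G}$, since by the previous paragraph it already has a fundamental bounded resolution (a fortiori). The external input I would invoke is the Cascales--Orihuela theorem: \emph{every precompact subset of an lcs in $\mathfrak{G}$ is metrizable}. Since $i \colon K \to L(K)$ is a topological embedding (recalled after the definition of $\VV(X)$), $K$ sits inside $L(K)$ as a compact, hence precompact, subset. If $L(K)$ were in $\mathfrak{G}$, then $K$ would be metrizable, contradicting the hypothesis. Therefore $L(K) \notin \mathfrak{G}$ and so $L(K)$ fails to be a quasi-$(DF)$-space.

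The only non-trivial ingredient is the metrizability theorem for precompact sets in $\mathfrak{G}$-spaces; once this is invoked, the argument is essentially a one-line application of the fact that $K$ embeds as a non-metrizable compactum in $L(K)$. Every other step is a direct consequence of results already proved in the paper.
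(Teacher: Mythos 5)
Your proposal is correct and follows essentially the same route as the paper: the $(df)$ part via Proposition \ref{p:ffbs-ffbr-L(X)} together with Theorem \ref{t:L(X)-c0-quasibarrelled} (compact spaces being $\mu$-spaces and sequentially Ascoli), and the failure of quasi-$(DF)$ via the Cascales--Orihuela metrizability theorem for compact subsets of spaces in $\mathfrak{G}$ applied to the embedded copy of $K$. The only difference is that you spell out the intermediate details the paper leaves implicit.
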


\begin{proof}
Theorem \ref{t:L(X)-c0-quasibarrelled} and Proposition \ref{p:ffbs-ffbr-L(X)} immediately imply that $L(K)$ is a $(df)$-space. Any compact subset of an lcs from the class $\GG$ is metrizable, see \cite{CasOr} or \cite[Theorem~11.1]{kak}. But since $K$, being a subspace of $L(K)$, is not metrizable we obtain that $L(K)$ is not a quasi-$(DF)$-space. \qed
\end{proof}

Now we prove Theorem \ref{t:L(X)-Grothendieck}.

\smallskip

{\em Proof of Theorem \ref{t:L(X)-Grothendieck}.}
Assume that $L(X)$ has the Grothendieck property and suppose for a contradiction that $X$ has an infinite compact subset $K$. Using  Lemma 11.7.1 of \cite{Jar}, we choose a one-to-one sequence $S=\{ a_n\}_{n\in\NN}$ in $K$ and a sequence $\{ U_n\}_{n\in\NN}$ of open pairwise disjoint subsets of $X$ such that $a_n\in U_n$ for every $n\in\NN$. Since $K$ is compact, $S$ has a cluster point $z\in K$. Without loss of generality we assume that $z\not\in S$. For every $n\in\NN$, choose a continuous function $f_n:X\to [0,2^n]$ such that
\[
\supp(f_n) \subseteq U_n \setminus \{ z\} \; \mbox{ and } \; f_n(a_n)=2^n.
\]
Since $U_n$ are pairwise disjoint, it follows that $f_n\to 0$ in the pointwise topology and hence in $\sigma\big( C(X),L(X)\big)$. Set $\mu: =\sum_{n\in\NN} 2^{-n} \delta_{a_n}$. Then, by Proposition \ref{p:strong-dual-of-L(X)}, $\mu\in \CC(X)'= L(X)''$ and $\mu(f_n)=1$ for every $n\in\NN$. Therefore $f_n\not\to 0$ in the weak topology of $\CC(X)$. Thus $L(X)$ does not have the Grothendieck property, a contradiction.

Conversely, if all compact subsets of $X$ are finite, then, by Proposition \ref{p:strong-dual-of-L(X)}, $L(X)''=\CC(X)'=C_p(X)'=L(X)$ and hence $L(X)$ trivially has the Grothendieck property.
\qed

\smallskip

To prove Theorem \ref{t:L(X)-DP-property} we shall use the following characterization of the $(DP)$ property.

\begin{theorem}[\protect{\cite[Theorem~9.3.4]{Edwards}}] \label{t:Edwards-DP}
An lcs $E$ has the $(DP)$ property if and only if every absolutely convex, weakly compact subset of $E$ is precompact for the topology $\tau_{\Sigma'}$ of uniform convergence on the absolutely convex, equicontinuous, weakly compact subsets of $E'_\beta$.
\end{theorem}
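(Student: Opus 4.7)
The plan is to establish the equivalence by a duality argument, handling the two directions separately. Throughout, write $\Sigma'$ for the family of absolutely convex equicontinuous weakly compact subsets of $E'_\beta$, and for $B'\in\Sigma'$ let $p_{B'}(x):=\sup_{x'\in B'}|\langle x,x'\rangle|$ denote the associated seminorm on $E$; the $\tau_{\Sigma'}$-precompactness of a set $K\subseteq E$ means precisely that $K$ is $p_{B'}$-totally bounded for every $B'\in\Sigma'$.

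For the direction $(\Leftarrow)$, assume every absolutely convex weakly compact $K\subseteq E$ is $\tau_{\Sigma'}$-precompact, and let $T\colon E\to F$ be a continuous linear operator into a quasi-complete lcs $F$ that sends bounded subsets of $E$ to relatively weakly compact subsets of $F$. Fix an absolutely convex closed neighborhood $V$ of zero in $F$; I would first observe that the polar $V^\circ\subseteq F'$ is absolutely convex and equicontinuous, and that the transpose $T'$ takes equicontinuous subsets of $F'$ to equicontinuous subsets of $E'$. A standard duality argument (the hypothesis on $T$ is equivalent to $T'$ sending equicontinuous sets of $F'$ to $\sigma(E',E'')$-relatively compact subsets of $E'_\beta$, which for absolutely convex weakly closed sets coincides with weak compactness in $E'_\beta$) shows $B':=T'(V^\circ)\in\Sigma'$. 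Since $p_{B'}(x)=\sup_{y'\in V^\circ}|\langle Tx,y'\rangle|=q_{V}(Tx)$, where $q_V$ is the gauge of $V^{\circ\circ}=V$, the $p_{B'}$-precompactness of $K$ translates to $T(K)$ being covered by finitely many $V$-translates. Varying $V$, $T(K)$ is precompact in $F$; quasi-completeness of $F$ then upgrades this to relative compactness, so $E$ has the $(DP)$ property.

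For the direction $(\Rightarrow)$, assume the $(DP)$ property and let $K\subseteq E$ be absolutely convex and weakly compact. Given $B'\in\Sigma'$, equip $B'$ with the weak-$\ast$ topology (under which it is compact by Alaoglu--Bourbaki thanks to equicontinuity) and consider the natural operator $T\colon E\to C(B')$ defined by $T(x)(x'):=\langle x,x'\rangle$, with $C(B')$ the Banach space of continuous functions with sup norm. I would verify that (a) $T$ is continuous (because $B'$ is equicontinuous), and (b) $T$ maps bounded subsets of $E$ into weakly relatively compact subsets of $C(B')$. Step (b) is the heart of the argument: using the Grothendieck characterization of weak compactness in $C(K)$-spaces together with the weak compactness of $B'$ in $E'_\beta$, one sees that for every bounded $A\subseteq E$, the image $T(A)$ is sup-bounded and pointwise relatively compact on $B'$ in a manner sufficient to force weak relative compactness. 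Now the $(DP)$ property applied to $T$ forces $T(K)$ to be norm-relatively compact in $C(B')$, i.e. $K$ is $p_{B'}$-precompact. Since $B'\in\Sigma'$ was arbitrary, $K$ is $\tau_{\Sigma'}$-precompact.

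The main obstacle is step (b) in the reverse direction: proving that the canonical evaluation operator $T\colon E\to C(B')$ sends bounded sets to weakly relatively compact sets. This requires combining the weak (not merely weak-$\ast$) compactness of $B'$ in $E'_\beta$ with a Grothendieck--Eberlein-type criterion for weak compactness in spaces of continuous functions, and it is essentially here that the hypothesis ``$B'$ is weakly compact in $E'_\beta$'' is used in full force rather than just ``$B'$ is equicontinuous.'' The forward direction by contrast is a routine transpose computation once one identifies $T'(V^\circ)$ as an element of $\Sigma'$.
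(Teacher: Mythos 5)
The paper does not actually prove this statement: it is quoted from Edwards \cite[Theorem~9.3.4]{Edwards} and used as a black box, so there is no internal proof to compare against. Your reconstruction is the standard textbook argument and is correct in both directions: for $(\Leftarrow)$ the identity $p_{T'(V^\circ)}=q_V\circ T$ together with Grothendieck's duality for weakly compact operators (that $T$ sends bounded sets to relatively weakly compact sets iff $T'$ sends equicontinuous sets to relatively $\sigma(E',E'')$-compact sets) reduces everything to $\tau_{\Sigma'}$-precompactness of $K$, and for $(\Rightarrow)$ the evaluation operator into $C(B')$ combined with the double-limit criterion for weak compactness in $C(K)$-spaces does the work. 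The two points you flag as delicate --- that $T'(V^\circ)$ genuinely lands in $\Sigma'$ (one should note it is weak-$\ast$ compact, hence $\sigma(E',E'')$-closed inside its relatively $\sigma(E',E'')$-compact closure), and that bounded subsets of $E$, being equicontinuous on $E'_\beta$, satisfy the interchangeable double-limit condition against the weakly compact set $B'$ --- are exactly the standard lemmas from Grothendieck's theory and are invoked correctly.
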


\medskip

{\em Proof of Theorem \ref{t:L(X)-DP-property}.}
First we prove the following claim.

{\em Claim 1. The topology $\tau_{\Sigma'}$ defined in Theorem \ref{t:Edwards-DP} is weaker than the original topology $\pmb{\nu}_X$ of $L(X)$}. Indeed, let $A$ be an absolutely convex, equicontinuous, weakly compact subset of $L(X)'_\beta$. Then, by Proposition \ref{p:equicontinuity-C(X)}, $A$ is an equicontinuous family of continuous functions on $X$. Taking into account that the weak topology of $L(X)'_\beta$ is stronger than the weak-$\ast$ topology $\sigma\big( C(X),L(X)\big)$ on $C(X)$, the weak compactness of $A$ implies that $A$ is also pointwise bounded. Therefore, by Theorem \ref{t:topology-L(X)-Raikov}, the polar $A^\circ$ of $A$ in $L(X)$ is a neighborhood of zero. Thus $\tau_{\Sigma'}\leq \pmb{\nu}_X$ and the claim is proved.

Now let $K$ be an absolutely convex, weakly compact subset of $L(X)$. Then, by Theorem 1.2 of \cite{Gab-Respected}, $K$ is a compact subset of $L(X)$. Therefore, by Claim 1, $K$ is even compact for the topology $\tau_{\Sigma'}$. Finally, Theorem \ref{t:Edwards-DP}  implies that $L(X)$ has the $(DP)$-property.
\qed 

\medskip
Recall that a topological space $X$ is called {\em sequential} if for each non-closed subset $A\subseteq X$ there is a sequence $\{a_n\}_{n\in\w}\subseteq A$ converging to some point $a\in \overline{A}\setminus A$. We note that a sequential space $X$ is discrete if and only if every convergent sequence is eventually constant.


\medskip

{\em Proof of Theorem \ref{t:L(X)-sDP-property}.}
Assume that $L(X)$ has the $(sDP)$-property and suppose for a contradiction that $X$ is not discrete. Then $X$ contains a one-to-one sequence $S=\{ a_n\}_{n\in\NN}$ which converges to some element $a_0 \not\in S$. Using  Lemma 11.7.1 of \cite{Jar} and passing to a subsequence if needed, we can choose a  sequence $\{ U_n\}_{n\in\NN}$ of open pairwise disjoint subsets of $X$ such that $a_n\in U_n$ and $a_0\not\in U_n$ for every $n\in\NN$. For every $n\in\NN$, choose a continuous function $f_n:X\to [0,1]$ with support in $U_n$ and such that $f_n(a_n)=1$. Clearly, $f_n\to 0$ in the pointwise topology on $C(X)$.

We claim that $f_n \to 0$ in the weak topology of $L(X)'_\beta$. Indeed, since $L(X)'_\beta=\CC(X)$ by Proposition \ref{p:strong-dual-of-L(X)}, we obtain $L(X)''=M_c(X)$. For every $\mu\in M_c(X)$, the  Lebesgue dominated convergence theorem implies that $\mu(f_n)\to 0$. The claim is proved.

Now, for every $n\in\NN$, set $\eta_n := a_n - a_0$. As $a_n \to a_0$ in $X$ we obtain $\eta_n \to 0$ in $L(X)$. Since
\[
f_n (\eta_n)=f_n(a_n) - f_n(a_0)=1 \not\to 0
\]
we obtain that $L(X)$ does not have the $(sDP)$-property, a contradiction. Thus $X$ must be discrete.

Conversely, assume that $X$ is discrete. Then $L(X)$ is the direct sum of $|X|$-many copies of $\IR$, and hence the strong dual of $L(X)$ is $\IR^{|X|}$. Since $\IR^{|X|}$ has the Schur property, Proposition 3.1 of \cite{Gabr-free-resp} implies that $L(X)$ has the $(sDP)$-property. \qed

\medskip

\bibliographystyle{amsplain}

\end{document}